\journal{Elsevier}
\newtheorem{definition}{Definition}[section]
\newtheorem{proposition}[definition]{Proposition}
\newtheorem{lemma}[definition]{Lemma}
\newtheorem{theorem}[definition]{Theorem}
\newtheorem{corollary}[definition]{Corollary}
\newdefinition{example}[definition]{Example}
\newdefinition{remark}[definition]{ Remark}
\newdefinition{problem}[definition]{ Problem}
\newdefinition{question}[definition]{Question}
\newdefinition{fact}[definition]{Fact}
\newproof{pot}{Proof}
\begin{document}

\begin{frontmatter}
\title{The hyperspaces $HS(p,X)$}

\author{Florencio~Corona--V\'azquez}
\ead{florencio.corona@unach.mx}

\author{Russell~Aar\'on~Qui\~nones--Estrella}
\ead{rusell.quinones@unach.mx}

\author{Javier~S\'anchez--Mart\'inez  \corref{cor1}}
\ead{jsanchezm@unach.mx}

\cortext[cor1]{Corresponding author}

\address{Universidad Aut\'onoma de Chiapas, Facultad de Ciencias en F\'isica y Matem\'aticas, Carretera Emiliano Zapata Km. 8, Rancho San Francisco, Ter\'an, C.P. 29050, Tuxtla Guti\'errez, Chiapas, M\'exico.}

\begin{abstract}
Let $X$ be a continuum and let $C(X)$ denote the hyperspace of subcontinua of $X$, endowed with the 
Hausdorff metric. For $p\in X$, define the hyperspace $C(p,X)=\{A\in C(X):p\in A\}$ as a subspace of $C(X)$. 
In this paper we introduced the quotient space $HS(p,X)=C(X)/C(p,X)$. We present some 
general properties of $HS(p,X)$ and we study the relationship between the continuum $X$ and the hyperspaces 
$C(X)$ and $HS(p,X)$.
\end{abstract}

\begin{keyword}
Continua  \sep hyperspaces \sep  quotient spaces.
\MSC Primary 
\sep 54B05 
\sep 54B20 
\sep 54F65 
\end{keyword}

\end{frontmatter}
\section{Introduction}

A \textit{continuum} is a nondegenerate compact connected metric space. From now on $X$ will denote a continuum, $2^{X}$ the corresponding  hyperspace of all nonempty closed subsets of $X$, $C_{n}(X)$ the hyperspace of at most $n$ connected components of $X$, when $n=1$, $C_{1}(X)$ will be denoted simply by $C(X)$, and $F_{n}(X)$ the hyperspace of all nonempty subsets of $X$ having at most $n$ points. All  hyperspaces above are considered with the Hausdorff metric (see \cite[p.~1]{Nadler(1978)}).
Given a point $p\in X$, $C(p,X)$ denotes the hyperspace of all subcontinua containing $p$, as a subspace of $C(X)$. It is well known that $C(p,X)$ is an AR continuum and consequently  locally connected (see \cite[Theorem 2, p. 220]{Eberhart(1978)}).  The hyperspaces $C(p,X)$ have been very useful to characterize some classes of continua, see for example \cite{Pellicer(2003)}, \cite{Pellicer(2005)}, \cite{Pellicer(2005b)}. In \citep{CESV(2018)} and \cite{CEST(2019)} the authors studied the topological structure of $C(p,X)$ in the case when $X$ is a finite graph.\\

On the other hand, in 1979 S. B. Nadler, Jr. introduced the \textit{hyperspace suspension of a continuum} $X$, $HS(X)$, as the quotient space $C(X)/F_{1}(X)$, \cite{Nadler(1979)}, which was subsequently studied in \cite{Escobedo(2004)}. In 2004 S. Mac\'ias, defines the \textit{$n$--fold hyperspace suspension of a continuum} $X$, $HS_{n}(X)$, as the quotient space $C_{n}(X)/F_{n}(X)$, \cite[\mbox{p.~127}]{Macias(2004)}. In a similar setting, other quotients of hyperspaces have been studied, for example $C_{n}(X)/F_{1}(X)$ in \cite{JCMacias(2008)}, $F_{n}(X)/F_{1}(X)$ in \cite{Barragan(2010)} and $F_{n}(X)/F_{m}(X)$ in \cite{CS(2013)}.\\

Let $p\in X$. In this paper we introduce  $HS(p,X)$ as the quotient space $C(X)/C(p,X)$.  The fact that $HS(p,X)$ is a continuum follows from \cite[3.14, p. 41]{Nadler(1992)}. We present some general properties of $HS(p,X)$ and study the relations between $X, C(X)$ and $HS(p,X)$ under concepts as dimension, unicoherence, locally connectedness,  aposyndesis and colocally connectedness.

\section{Definitions and preliminaries}
As customary, for $A\subset X$, $\textrm{cl}(A)$ and $\textrm{int}(A)$ denote the  closure and interior of $A$ in $X$ respectively.

By a \textit{finite graph} we mean a continuum $X$ which can be written as the union of finitely many arcs, any two of which are either disjoint or intersect only in one or both of their end points. A \textit{tree} is a finite graph without simple closed curves. Given a positive integer $n$,  a \textit{simple $n$-od} is a finite graph, denoted by $T_n$, which is the union of $n$ arcs emanating from a single point, $v$, and  otherwise disjoint from each another. The point $v$ is called the \textit{vertex} of the simple $n$--od. A simple $3$--od, $T_{3}$, will be called a \textit{simple triod}.  A $n$\textit{--cell}, $I^{n}$, is any space homeomorphic to $[0,1]^n$.  A \textit{simple closed curve} is a continuum homeomorphic to the unit circle $S^{1}=\{(x,y)\in\mathbb{R}^{2}:x^{2}+y^{2}=1\}$.

Given a finite graph $X$, $p\in X$ and a positive integer $n$, we say that $p$ \textit{is of order $n$ in $X$}, denoted by 
ord$(p,X)=n$, if $p$ has a closed neighborhood which is homeomorphic to a simple $n$--od having $p$ as the vertex. 
If ord$(p,X)=1$ the point $p$ is called an \textit{end point of $X$}. The set of all end points of $X$ will be denoted by $E(X)$. If ord$(p,X)=2$ the point $p$ is called an \textit{ordinary point of $X$}. The set of all ordinary points of $X$ will be denoted by $O(X)$. A point $p\in X$ is a \textit{ramification point of $X$} if ord$(p,X)\geq 3$. The set of all ramification points of $X$ will be denoted by $R(X)$.

Let $d$ denote the metric of $X$. Given $\varepsilon>0$ and $p\in X$ we define $B_{\varepsilon}(p)=\{x\in X: d(x,p)<\varepsilon\}$ and if $A\subset X$, we define:
$$N(\varepsilon,A)=\left\lbrace  x\in X:\textrm{there exists $y\in A$ such that $x\in B_{\varepsilon}(y)$}\right\rbrace .$$
If $A$ and $B$ are two closed subsets of $X$, then the \textit{Hausdorff distance
between $A$ and $B$} is defined by:
$$H(A,B)=\inf\{\varepsilon>0: A\subset N(\varepsilon,B)\textrm{ and $B\subset N(\varepsilon,A)$}\}.$$
It is well known that $H$ is a metric for $2^{X}$ (see \cite[Theorem 2.2, p. 11]{IllanesNadler(1999)}) and is called the \textit{Hausdorff metric}.

Given a finite collection $K_{1},\dots,K_{r}$ of subsets of $X$, $\left\langle K_{1},\dots,K_{r}\right\rangle$, denotes the following subset of $2^X$: $$\left\{A\in 2^{X}:A\subset \displaystyle\bigcup^{r}_{i=1}K_{i}, A\cap K_{i}\neq\emptyset\textrm{ for each }i\in\left\{1,\dots,r\right\}\right\}.$$ It is known that the family of all subsets of $2^{X}$ of the form $\left\langle K_{1},\dots,K_{r}\right\rangle$, where each $K_{i}$ is an open subset of $X$, forms a basis for a topology for $2^{X}$ (see \cite[Theorem 0.11, p.~9]{Nadler(1978)}) called the \textit{Vietoris Topology}. The Vietoris topology and the topology induced by the Hausdorff metric coincide (see \cite[Theorem 0.13, p.~10]{Nadler(1978)}).

A \textit{mapping} is a continuous function. 

The quotient mapping from $C(X)$ to $HS(p,X)$ is denoted by $\pi^X_p$, and $C^{X}_{p}$ denotes the point $\pi^X_p(C(p,X))$ in $HS(p,X)$. If there is no confusion, we will write down $[A]=\pi^{X}_{p}(A)$ for any $A\in C(X)-C(p,X)$.

\begin{remark}\label{complements}
By using the restriction of the projection $\pi^X_p$, it is clear that $C(X)-C(p,X)$ is 
homeomorphic to $HS(p,X)-\{C^{X}_{p}\}$.
\end{remark}

Given $A,B\in C(X)$ such that $A\subset B\neq A$, an \textit{order arc from $A$ to $B$} is a mapping  $\alpha:\left[0,1\right]\rightarrow C(X)$ such that $\alpha(0)=A$, $\alpha(1)=B$ and $s<t$ implies that $\alpha(s)\subset
\alpha(t)\neq \alpha(s)$. The existence of order arcs is standard stuff (cf. \cite[(1.8) and (1.11)]{Nadler(1978)}).

For a topological space $Z$, the \textit{cone over $Z$}, denoted by $\textrm{cone}(Z)$, is the quotient space $Z\times [0,1]/Z\times\{1\}$. 

\section{Examples}
In this section we present geometric models for $HS(p,X)$ for some continua, to give 
the reader an idea of the kind of spaces we will be concerned in this paper.
its 

\begin{example}\label{arco}
Let $X=[0,1]$. It is well known that the mapping $h:C(X)\to 
\mathbb{R}^{2}$ given by $h([a,b])=\left( \frac{a+b}{2},b-a\right) $ for each $[a,b]\in C(X)$ is an embedding. Also, $h(C(X))$ is the 
triangular $2$--cell, $T$, with vertices $(0,0)$, $(1,0)$ and $(\frac{1}{2},1)$. Let $p\in (0,1)$, by using the 
homeomorphism $h:C(X)\to T$ it is easy to see that  $HS(p,X)$ is homeomorphic to a space getting by gluing two 
2--cells by a point, which corresponds to $C^{X}_{p}$. 
\[
\begin{tikzpicture}[scale=1.7]
   \begin{scope}[xshift=-2.5cm] 
    \coordinate (a) at (0,0); \coordinate (b) at (0:2cm); \coordinate (c) at (60:2cm);
    \draw[fill=gray!10] (a) -- (b) -- (c) -- cycle;
    \coordinate (v) at (0:1cm);
    \coordinate (medl) at (60:1cm); \coordinate (medr) at ($(v)+(60:1cm)$);
    \draw (0:1cm) -- (medl){}; \draw (v)-- (medr);
    \draw[fill=gray!30] (v) -- (medr) -- (c) -- (medl) -- cycle;
    \node[below] at (v){\scriptsize $\{p\}$};  \node[above=25pt] at (v){\scriptsize $C(p,X)$};
    \draw node at (0.7,-0.7){$C(X)$};
   \end{scope}
 \draw[-latex] (-1,1) to[out=35, in=145] (0.5,1); \draw node at (-0.2,1){\scriptsize $\pi _p^X$}; 
 \begin{scope}[xshift=1cm,yshift=0.3cm,scale=1.1]
  \draw[fill=gray!10] (0,0) {[rounded corners] -- (-0.3,0.6) -- (-1,-0.1)} -- (0,0) 
  {[rounded corners]-- (1,-0.1) -- (0.3,0.6)} -- cycle; 
  \draw node at (0,0){\tiny$\bullet$} node at (0,0)[below=3pt]{\scriptsize $C_p^X$};
  \draw node at (0,-0.8){$HS(p,X)$};
 \end{scope}   
\end{tikzpicture}
\]
In case $p=0$ or $p=1$ we have that each of $HS(0,X)$, $HS(1,X)$ and $C(X)$ are homeomorphic.
\[
\begin{tikzpicture}[scale=1.5]
 \begin{scope}[xshift=-1.3cm]
  \draw[fill=gray!10] (0,0) -- (2,0) -- (60: 2cm) -- cycle;
  \draw[line width=2pt] (0,0) -- (60:2cm);
  \draw[below] node at (0,0){\scriptsize $\{0\}$} node at (0,1)[rotate=60]{\scriptsize $C(0,X)$};
  \draw node at (1,-0.5){$C(X)$};
 \end{scope}
\draw node at (1,1.5){\scriptsize$\pi_0^X$} node at (1.2,0.2){\scriptsize $C_0^X$};
\draw[-latex] (0.3,1.3) to[out=35, in=120] (1.5,0.7); 
  \begin{scope}[xshift=1.2cm]
   \draw[fill=gray!10] {[rounded corners=0.7cm] (2,0) -- (1.5,0.7)} -- (0,0) --cycle;
   \draw node at (0,0){\tiny $\bullet$} node at (1,-0.5){$HS(0,X)$};
  \end{scope}
\end{tikzpicture}
\]
\end{example}

\begin{example}\label{circle}
Let $X=S^{1}$. By using the geometric model of $C(X)$ given in \cite[Example 5.2, p. 35]{IllanesNadler(1999)}, 
it is easy to see that $HS(p,X)$ is homeomorphic to $C(X)$ for each point $p\in X$.
\[
\begin{tikzpicture}
   \draw[fill=gray!10] (0,0) circle (1.5cm);
   \draw[fill=gray!30] (-0.3,0) .. controls +(130:1cm) and +(130:2cm) .. (1.5,0);
   \draw[fill=gray!30] (-0.3,0) .. controls +(-130:1cm) and +(-130:2cm) .. (1.5,0);
  \draw[fill=gray!10] (7,0) circle (1.5cm); 
  \draw node at (8.5,0){\tiny $\bullet$};
 \draw[-latex] (2,1) to[out=20, in=160] (5,1);
\draw node at (0,-1.9) {$C(X)$}
      node at (0.6,0){\small $C(p,X)$} 
      node at (2,0){$\{p\}$} node at (1.5,0){\tiny $\bullet$};
\draw node at (7,-1.9) {$HS(p,X)$} node at (9,0){$C_p^X$} node at (3.5,1.7){$\pi_p^X$};
\end{tikzpicture}
\]
\end{example}

\begin{example}\label{triod}
Let $X$ be a simple n--od with vertex $v$. By \cite[Example 5.4, p. 41]{IllanesNadler(1999)}, $C(X)$ is 
homeomorphic to a $n$--cell, $I^{n}$, with $n$ 2--cells attached on it and sharing a common point.
In that geometric model $C(v,X)$ is represented as $I^{n}$. So, $HS(v,X)$ 
is homeomorphic to $n$ 2--cells glued in a unique point, $C^{X}_{v}$.
\[
\begin{tikzpicture}
 \begin{scope}[rotate=90,scale=1.5] %
  \draw[fill=gray!30] (0,0) --(1,0) -- (1,2) --(0,2) --cycle;
  \draw[fill=gray!30] (0,2) --++(1,1) --++(1,0) --(1,2) -- cycle;
  \draw[fill=gray!30] (1,0) --++(1,1) --++(0,2) --(1,2) -- cycle;
  \coordinate (v2) at (0.8,0.8);
  \foreach \x/\y in {2/1,1/3,0/0}{
      \draw[dashed] (v2) -- (\x,\y);
  }
  \draw[fill=gray!10] (0,0) -- (-1.5,-0.5) -- (0,2);
    \coordinate (a) at (intersection of 0.8,0.8-- -0.5,-2 and 0,0--0.5,-2);
  \draw[fill=gray!10] (0,0) -- (a) -- (-0.5,-2) -- cycle;
  \draw[fill=gray!10] (0,0) -- (1,0) -- (0.5,-2)--cycle;
  \draw[dotted] (a) -- (v2);
  \draw node at (1.5,1.7){\scriptsize$C(v,X)\simeq I^n$} 
	node at (0,0){\small $\bullet$} 
	node at (-0.3,-0.3){\scriptsize$\{v\}$}
	node at (-0.3,2.5){$C(X)$};
  \draw[very thick, dotted] (-1,-0.5) -- +(0.4,-0.6);
  \draw (0,0) -- (0,2) (0,0)--(1,0);
 \end{scope}
\draw[-latex] (1,2) to[out=20, in=160] (4,2.5);
  \draw node at (2.5,2.9){\small $\pi _v^X$};
  \begin{scope}[xshift=4.5cm, yshift=1.5cm,scale=1.5]
    \foreach \x in{-75,-20,-2}{
      \begin{scope}[rotate=\x]
	\draw[fill=gray!10] (0,0) {[rounded corners] -- (0.5,0.5)} -- (2,0) {[rounded corners]-- (0.5,-0.5)} 
--cycle;
      \end{scope}
    }
    \draw node at (0,0){\small $\bullet$} 
	  node at (-0.3,0.1){\scriptsize $C_v^X$}
	  nodeat (1.7,0.7){$HS(v,X)$};
    \draw[very thick, dotted] (0.7,-1.5) -- +(0.7,0.7);
  \end{scope}
\end{tikzpicture}
\]
\end{example}

\begin{example}\label{paleta}
Let $Y=\lbrace(x,y)\in\mathbb{R}^{2}:(x+1)^{2}+y^{2}=1\rbrace\cup \lbrace(x,0)\in\mathbb{R}^{2}:x\in 
[0,1]\rbrace$ and denote by $p$ the point $(0,0)$. By \cite[Example 5.3, p. 36]{IllanesNadler(1999)}, $C(X)$ 
is homeomorphic to a 3--cell, $T$, with two 2--cells attached on it as in the picture below (see also Figure 
6 of \cite[p. 37]{IllanesNadler(1999)}). The hyperspace $C(p,X)$ coincides with $T$ in that 
model. We get that $HS(p,X)$ is homeomorphic to the union of a two 2--cells gluing by a unique point, 
$C^{X}_{p}$. 
\[
\begin{tikzpicture}
  \begin{scope}[xshift=-3cm]
  \draw[fill=gray!10] (0,0) arc (360:110:1.7cm and 0.7cm); 
  \draw[fill=gray!40] (0,0) arc (360: 180: 1.25cm and 0.4cm) -- +(0,1.5) arc (180:360: 1.25cm and 0.4cm) 
--cycle;
  \draw[fill=gray!40] (-2.5,1.5) arc (180:50:0.4cm) --+(0,0) .. controls +(110:0.8cm) and +(100:0.5cm) .. 
(0,1.5) arc (360:180:1.25cm and 0.4cm);
  \draw[fill=gray!10] (0,0) -- (2,0) -- (0,1.5) --cycle;
\draw node at (0,0){\tiny $\bullet$} 
      node at (0.1,0)[below=5pt]{\scriptsize$\{p\}$}
      node at (-1.3,0.3){\small $C(p,X)$}
      node at (-1.3, -1.2){$C(X)$};
\end{scope}
\draw[-latex] (-1.2,1) to[out=45, in=135] (2,1);
\draw node at(0.5,2){$\pi_p^X$};
\begin{scope}[xshift=3cm]
  \draw[fill=gray!10] (0,0) circle (1.5cm and 0.5cm);
  \draw[fill=gray!10] (1.5,0) -- +(2.5,0) {[rounded corners=0.7cm] -- (2.2,1)} -- cycle;
  \draw node at (1.5,0){\tiny $\bullet$} 
	node at (1.6,-0.4){\scriptsize $C_p^X$}
	node at (1.6,-1){$HS(p,X)$};
\end{scope}
\end{tikzpicture} 
\]
\end{example}

\begin{example}\label{endpoints}
Let $X$ be a finite graph and $p\in E(X)$. If $X$ is not an arc, there exists a unique point $v\in R(X)$ such 
that if $U$ is the component of $X-\{v\}$ containing $p$, then $L:=U\cup \{v\}$ is an arc with end points 
$p$ and $v$. Set $G=\textrm{cl}(X-U)$, note that $C(X)=C(L)\cup C(G)\cup C(v,X)$ 
and $C(p,X)\cap C(G)=\emptyset$. Therefore, $C(G)$ and $\pi^{X}_{p}(C(G))$ are homeomorphic. By Example 
\ref{arco}, $\pi^{X}_{p}(C(L))$ is a 2--cell which is homeomorphic to $C(L)$. On the other hand, note that $C(v,X)$ is 
homeomorphic to $C(v,G)\times [0,1]$, in fact, if $f:[0,1]\to L$ is a homeomorphism such that 
$f(0)=v$ and $f(1)=p$, then the mapping $h:C(v,X)\to C(v,G)\times [0,1]$ given by 
$$h(A)=(A\cap G,f^{-1}(A\cap L)), \textrm{for each $A\in C(v,X)$},$$ is a homeomorphism. We 
can see that the homeomorphism $h$ send $C(p,X)\cap C(v,X)$ into $C(v,G)\times \{1\}$. Therefore, 
$\pi^{X}_{p}(C(v,X))$ is homeomorphic to $\textrm{cone}(C(v,G))$, moreover, the vertex of 
$\textrm{cone}(C(v,G))$ is $C^{X}_{p}$. 

Consequently, note that $HS(p,X)$ is homeomorphic to the union of the 2--cell, $\pi^{X}_{p}(C(L))$, the space homeomorphic 
to the $\textrm{cone}(C(v,G))$, $\pi^{X}_{p}(C(v,X))$, and the subspace 
$\pi^{X}_{p}(C(G))$ which is homeomorphic to $C(G)$, following the next 
intersections:
\begin{itemize}
\item $\pi^{X}_{p}(C(L))\cap \pi^{X}_{p}(C(v,X))=C(v,L)$, which is an arc;
\item $\pi^{X}_{p}(C(L))\cap \pi^{X}_{p}(C(G))=\{C^{X}_{p}\}$;
\item $\pi^{X}_{p}(C(G))\cap \pi^{X}_{p}(C(v,X))=\pi^{X}_{p}(C(v,G))$, which is homeomorphic to 
$C(v,G)$ because $C(v,G)\cap C(p,X)=\emptyset$. 
\end{itemize}

\[
\begin{tikzpicture}
\begin{scope}[rounded corners, xscale=1.8, yscale=1.1,xshift=-3.3cm]
\draw[fill=gray!10] (0,0) -- (0.5,-0.7) -- (2,-1) -- (2.5, -0.5) -- (2.3,0) -- 
      (2.3,0.4) -- (1.8,0.7) -- (0.4,0.5) -- cycle;
\draw[fill=gray!10,sharp corners] (0.045,0) rectangle (1.46,3);
\draw[fill=gray!10,dashed] (0.04,0) -- (0.5,-0.3) -- (1,-0.1) -- (1.3,-0.3) -- (1.5,0) --
      (1.3,0.3) -- (1,0.2) -- (0.5,0.5) -- cycle;
\draw[fill=gray!30,yshift=3cm] (0,0) -- (0.5,-0.3) -- (1,-0.1) -- (1.3,-0.3) -- (1.5,0) --
      (1.3,0.3) -- (1,0.2) -- (0.5,0.3) -- cycle;
\draw (0.04,0) -- (0.5,-0.3) -- (1,-0.1) -- (1.3,-0.3) -- (1.5,0);
\draw[fill=gray!10] (0.04,0) -- (-1,0.3) -- (-1.5,0) -- (-1,1.5) -- (0.04,3);
\draw (0.04,0) -- (0.04,3);
\draw[thick] (-1.5,0.04) -- (-1,1.5) -- (0.04,3);
\draw node at (-0.5,1) {\small $C(L)$}
      node at (1,1.5) {\small $C(v,X)$}
      node at (0.04,0){\tiny $\bullet$} node at (-0.04,-0.3){\small $\{v\}$}
      node at (-1.46,0.04){\small $\bullet$} node at (-1.46,0.04)[below]{\small $\{p\}$}
      node at (0.6,0){\color{gray} \small$C(v,G)$};
\end{scope}
\draw[->](-3,3.5) to[out=30,in=150](-0.5,3.5);
\draw node at (-1.7,3.3){$\pi_p^X$};
\begin{scope}[rounded corners, xscale=1.8]
\draw[fill=gray!10] (0,0) -- (0.5,-0.7) -- (2,-1) -- (2.5, -0.5) -- (2.3,0) -- 
      (2.3,0.4) -- (1.8,0.7) -- (0.4,0.5) -- cycle;
\draw[fill=gray!10,sharp corners]  (0.045,0) -- (1.46,0) -- (0.045,3);
\draw[fill=gray!10,dashed] (0.04,0) -- (0.5,-0.3) -- (1,-0.1) -- (1.3,-0.3) -- (1.5,0) --
      (1.3,0.3) -- (1,0.2) -- (0.5,0.5) -- cycle;
\draw (0.04,0) -- (0.5,-0.3) -- (1,-0.1) -- (1.3,-0.3) -- (1.5,0);
\draw[fill=gray!10] (0.04,0) -- (-0.5,1) -- (0.04,3);
\draw (0.04,0) -- (0.04,3);
\draw 
      node at (0.5,1.5) [rotate=-45]{\small $\pi _p^X(C(v,X))$}
      node at (0.04,3){\small $\bullet$} node at (0.04,3)[above]{\small $C_v^X$};
      \end{scope}
\end{tikzpicture}
\]
%
\end{example}

\begin{example}
Let $X$ be a simple $n$--od with vertex $v$ and $p\in E(X)$. We denoted by $L$ the unique arc in $X$ with end points $p$ and $v$ and set $G=\textrm{cl}(X-L)$. Since $C(v,X)$ is a $n$--cell which is homeomorphic to both $C(v,G)\times [0,1]$ and $\textrm{cone}( C(v,G))$, using the Example \ref{endpoints} we can see that $HS(p,X)$ is homeomorphic to $C(X)$.    
\end{example}

\section{Dimension of $HS(p,X)$}

In this section, \textit{dimension} means inductive dimension as defined in \cite[(0.44), p. 21]{Nadler(1978)}. The symbols $\dim(X)$ and $\dim_{p}(X)$ will be used to denote the dimension of the space $X$ and the dimension of $p$ in $X$, respectively.

\begin{lemma}\label{dimensionleq}
For each point $p\in X$, it holds that $\dim (HS(p,X))\leq \dim(C(X))$.
\end{lemma}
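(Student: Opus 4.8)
The plan is to bound the \emph{local} (small inductive) dimension of $HS(p,X)$ at each of its points and then invoke the standard fact that, for a separable metric space $Z$, one has $\dim Z=\sup\{\dim_{z}Z:z\in Z\}$. We may assume $\dim C(X)=n$ is finite, for otherwise the inequality is trivial. For every point $q\in HS(p,X)$ with $q\neq C^{X}_{p}$, Remark~\ref{complements} supplies a homeomorphism between $C(X)-C(p,X)$ and $HS(p,X)-\{C^{X}_{p}\}$; since $C(p,X)$ is closed in $C(X)$, the set $C(X)-C(p,X)$ is open in $C(X)$, so $q$ has a neighborhood homeomorphic to an open subset of $C(X)$. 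By monotonicity of dimension under subspaces, $\dim_{q}(HS(p,X))\leq\dim(C(X))=n$ for all such $q$.

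It remains to estimate the dimension at the single point $C^{X}_{p}$, and this is where the real difficulty lies. A basic open neighborhood of $C^{X}_{p}$ in $HS(p,X)$ is exactly $\pi^{X}_{p}(U)$ for an open set $U\subseteq C(X)$ with $C(p,X)\subseteq U$; note that any such $U$ is automatically saturated, since $C(p,X)$ is the only nondegenerate fiber of $\pi^{X}_{p}$. Because $C(p,X)\subseteq U$, the boundary $\partial U$ is disjoint from $C(p,X)$, so $\pi^{X}_{p}$ restricts to a homeomorphism of $\partial U$ onto its image; using that $\pi^{X}_{p}$ is a closed map (a continuous surjection from a compactum onto a Hausdorff space), one checks that $\partial(\pi^{X}_{p}(U))=\pi^{X}_{p}(\partial U)$. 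Consequently $\dim\partial(\pi^{X}_{p}(U))=\dim\partial U$, and to obtain $\dim_{C^{X}_{p}}(HS(p,X))\leq n$ it suffices to produce, inside any prescribed saturated neighborhood of $C(p,X)$, a smaller open $U\supseteq C(p,X)$ whose boundary in $C(X)$ has dimension at most $n-1$.

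The main obstacle is precisely this control of $\dim\partial U$: the naive estimate gives only $\dim\partial U\leq n$, which would yield the weaker bound $\dim HS(p,X)\leq n+1$. To recover the missing unit I would invoke the partition (separation) characterization of inductive dimension: for separable metric spaces the small and large inductive dimensions coincide with $\dim$, and $\dim C(X)=n$ guarantees that for the closed set $C(p,X)$ and any open $W\supseteq C(p,X)$ there is an open $U$ with $C(p,X)\subseteq U\subseteq W$ and $\dim\partial U\leq n-1$. Transporting such a $U$ through $\pi^{X}_{p}$ as in the previous paragraph yields arbitrarily small neighborhoods of $C^{X}_{p}$ with boundary of dimension $\leq n-1$, whence $\dim_{C^{X}_{p}}(HS(p,X))\leq n$. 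Combining the two cases via the local-to-global formula gives $\dim(HS(p,X))\leq n=\dim(C(X))$, as required.
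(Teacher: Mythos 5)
Your proof is correct, but it takes a genuinely different route from the paper's. The paper's argument is two lines long: by Remark~\ref{complements}, $HS(p,X)-\{C^{X}_{p}\}$ is homeomorphic to $C(X)-C(p,X)$; since $HS(p,X)$ is a nondegenerate continuum, deleting the single point $C^{X}_{p}$ does not change its dimension \cite[Corollary 2, p.~32]{HurewiczWallman(1948)}, and the subspace theorem \cite[Theorem III.1, p.~26]{HurewiczWallman(1948)} then gives $\dim(HS(p,X))=\dim(C(X)-C(p,X))\leq\dim(C(X))$. You never invoke the point-adjunction fact; instead you handle the one problematic point directly, observing that open neighborhoods of $C^{X}_{p}$ are exactly the images $\pi^{X}_{p}(U)$ of open sets $U\supseteq C(p,X)$ (automatically saturated, since $C(p,X)$ is the only nondegenerate fiber), that $\partial\bigl(\pi^{X}_{p}(U)\bigr)=\pi^{X}_{p}(\partial U)$ homeomorphically because $\pi^{X}_{p}$ is closed and injective off $C(p,X)$, and then using the partition characterization of $\mathrm{Ind}$ together with the coincidence theorem $\mathrm{ind}=\mathrm{Ind}=\dim$ for separable metric spaces to manufacture arbitrarily small neighborhoods of $C^{X}_{p}$ with boundary of dimension at most $n-1$; the local-to-global supremum formula finishes. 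In effect you reprove, in this special situation, the Hurewicz--Wallman fact the paper cites. Each approach buys something: the paper's is shorter and cleaner as a citation-based argument, while yours is self-contained and yields slightly finer information, namely the local estimate $\dim_{C^{X}_{p}}(HS(p,X))\leq\dim(C(X))$ with explicit low-dimensional separators around $C^{X}_{p}$, which the paper's global argument does not isolate. All the delicate steps in your write-up (saturation of $U$, closedness of the quotient map, $\mathrm{cl}(\pi^{X}_{p}(U))=\pi^{X}_{p}(\mathrm{cl}(U))$, and the appeal to $\mathrm{Ind}\,C(X)=\dim C(X)$ for the closed set $C(p,X)$) check out.
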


\begin{proof}
By Remark \ref{complements}, $HS(p,X)-\{C^{X}_{p}\}$ is homeomorphic to $C(X)-C(p,X)$. By \cite[Corollary 2, p. 32 and Theorem III.1, p. 26]{HurewiczWallman(1948)}, we have that 
\begin{eqnarray*}
\dim (HS(p,X)) & = & \dim (HS(p,X)-\{C^{X}_{p}\}) \\
& = & \dim(C(X)-C(p,X)) \\
& \leq & \dim (C(X)).
\end{eqnarray*}
\end{proof}

\begin{proposition}
Let $p\in X$. It holds that $\dim C(X)<\infty$ if and only if $\dim HS(p,X)<\infty$ and $\dim C(p,X)<\infty$. 
\end{proposition}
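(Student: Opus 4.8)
The plan is to establish the two implications separately, leaning on Lemma~\ref{dimensionleq} and Remark~\ref{complements} together with the standard monotonicity and addition theorems of inductive dimension for separable metric spaces recorded in \cite{HurewiczWallman(1948)}. Throughout I use that $C(X)$ is a continuum, hence separable metric, so every subspace in sight is separable metric and these theorems apply.

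For the forward implication, assume $\dim C(X)<\infty$. Since $C(p,X)$ is a subspace of $C(X)$, monotonicity of the inductive dimension (the same sort of monotonicity invoked in the proof of Lemma~\ref{dimensionleq}) gives $\dim C(p,X)\leq \dim C(X)<\infty$. Moreover, Lemma~\ref{dimensionleq} yields directly $\dim HS(p,X)\leq \dim C(X)<\infty$. This direction therefore requires no further work.

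For the converse, assume $\dim HS(p,X)<\infty$ and $\dim C(p,X)<\infty$. First I would observe that, by Remark~\ref{complements}, $C(X)-C(p,X)$ is homeomorphic to $HS(p,X)-\{C^{X}_{p}\}$, which is a subspace of $HS(p,X)$; hence by monotonicity $\dim\bigl(C(X)-C(p,X)\bigr)\leq \dim HS(p,X)<\infty$. Next I would write $C(X)=\bigl(C(X)-C(p,X)\bigr)\cup C(p,X)$ as a union of two subspaces, each now known to be finite-dimensional, and apply the addition theorem $\dim(A\cup B)\leq \dim A+\dim B+1$ to conclude that $\dim C(X)\leq \dim\bigl(C(X)-C(p,X)\bigr)+\dim C(p,X)+1<\infty$.

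The only genuinely substantive part is the converse, and its crux is invoking the correct form of the addition theorem for inductive dimension; the remaining steps reduce to monotonicity of dimension under passage to subspaces and to the identification of $C(X)-C(p,X)$ furnished by Remark~\ref{complements}. I do not expect any obstacle beyond citing the addition theorem precisely, since both summands have already been shown to be finite-dimensional before the theorem is applied.
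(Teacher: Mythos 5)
Your proof is correct and follows essentially the same route as the paper: the forward direction via monotonicity of inductive dimension together with Lemma~\ref{dimensionleq}, and the converse via Remark~\ref{complements} and the decomposition $C(X)=\bigl(C(X)-C(p,X)\bigr)\cup C(p,X)$, closed by the addition theorem $\dim(A\cup B)\leq\dim A+\dim B+1$, which is exactly the result B), p.~28 of \cite{HurewiczWallman(1948)} that the paper cites. No gaps.
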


\begin{proof}
If $\dim HS(p,X)<\infty$,  then $\dim (C(X)-C(p,X))<\infty$ because of \cite[Theorem III.1, p. 26]{HurewiczWallman(1948)} and because $C(X)-C(p,X)$ is homeomorphic to $HS(p,X)-\{C^{X}_{p}\}$. Since $C(X)=(C(X)-C(p,X))\cup C(p,X)$, we conclude, by using \cite[B), p. 28]{HurewiczWallman(1948)}, that $\dim C(X)<\infty$. 

The converse follows directly from \cite[Theorem III.1, p. 26]{HurewiczWallman(1948)}  and  Lemma \ref{dimensionleq}. 
  
\end{proof}

The following result is a particular case of \cite[Theorem 2.4, p. 791]{Martinez(2006)}.

\begin{theorem}\label{veronica}
Let $X$ be a finite graph and $A\in C(X)$. Then:
\begin{enumerate}
\item $\dim_{A}(C(X))=2+\displaystyle\sum_{r\in R(X)\cap A}(\textrm{ord}(r,X)-2)$ and
\item $\dim_{X}(C(X))=\dim (C(X)).$
\end{enumerate}
\end{theorem}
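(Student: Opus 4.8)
The plan is to prove the two statements separately, since the first is the substantive computation and the second is essentially a local-to-global observation that will follow from it.

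For part (1), I would set up a local analysis of $C(X)$ at the point $A$. The natural strategy is to decompose a neighborhood of $A$ in $C(X)$ into pieces governed by how subcontinua near $A$ can grow. Since $X$ is a finite graph, the structure of $X$ in a neighborhood of each point of $A$ is either an arc (at ordinary and end points) or a simple $n$--od (at ramification points). The key idea is that the dimension of $C(X)$ at $A$ is controlled by the number of independent directions in which a subcontinuum containing (or near) $A$ can be perturbed: it can grow or shrink at its boundary, and it can branch at each ramification point it meets. I would first establish a local product-type description: near $A$, the hyperspace $C(X)$ looks like a product of a $2$--dimensional factor (coming from the two natural ways an arc-like piece can move at a generic boundary point, i.e. the base $2$--cell one already sees in Example \ref{arco}) together with an extra $(\textrm{ord}(r,X)-2)$--dimensional factor for each ramification point $r\in R(X)\cap A$, reflecting the additional free arms at $r$. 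Summing the local dimensions of these factors yields the stated formula. The cleanest way to organize this is to cite the general result \cite[Theorem 2.4, p. 791]{Martinez(2006)} and verify that the present hypotheses ($X$ a finite graph, $A\in C(X)$) are a special instance of its hypotheses, then read off the formula; since the excerpt explicitly states this theorem is a particular case of that reference, the intended proof is to specialize that theorem rather than to reprove the dimension count from scratch.

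For part (2), I would argue that $\dim(C(X)) = \sup_{A\in C(X)} \dim_A(C(X))$, which holds for separable metric spaces by the definition of dimension as the supremum of local dimensions. So it suffices to show that $A=X$ maximizes the right-hand side. Applying the formula from part (1) with $A=X$ gives $\dim_X(C(X)) = 2 + \sum_{r\in R(X)}(\textrm{ord}(r,X)-2)$, where now the sum ranges over \emph{all} ramification points of $X$, since $R(X)\cap X = R(X)$. For any other $A\in C(X)$ we have $R(X)\cap A\subseteq R(X)$, and every summand $\textrm{ord}(r,X)-2$ is nonnegative (indeed $\geq 1$) for $r\in R(X)$; hence $\dim_A(C(X))\leq \dim_X(C(X))$. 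Taking the supremum over $A$ gives $\dim(C(X)) = \dim_X(C(X))$.

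The main obstacle is part (1): the honest content lies in justifying the local product decomposition of $C(X)$ near $A$ and in correctly accounting for the branching contribution at ramification points meeting $A$. Since the excerpt flags this as a direct specialization of \cite{Martinez(2006)}, the expected proof sidesteps this difficulty by invoking that theorem; the work I would need to do is merely to check that the correspondence between their notation and the present quantities $\textrm{ord}(r,X)$ and $R(X)\cap A$ matches, so that the formula transfers verbatim. Part (2) is then routine once (1) is in hand, the only subtlety being to confirm that $\dim(C(X))$ equals the supremum of the pointwise dimensions and that the maximizing continuum is all of $X$.
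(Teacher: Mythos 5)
Your proposal matches the paper's treatment: the paper gives no independent proof of this theorem, stating only that it is a particular case of \cite[Theorem 2.4, p.~791]{Martinez(2006)}, which is exactly the specialization you invoke for part (1). Your supplementary derivation of part (2) from part (1) --- using that $\dim(C(X))=\sup_{A\in C(X)}\dim_{A}(C(X))$ for the inductive dimension of a compact metric space, together with the observation that each summand $\textrm{ord}(r,X)-2\geq 1$ for $r\in R(X)$, so the formula is maximized precisely when $R(X)\cap A=R(X)$, e.g.\ at $A=X$ --- is correct and routine, so the two approaches are essentially the same.
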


Since $HS(p,X)-\{C^{X}_{p}\}$ is homeomorphic to the open set $C(X)-C(p,X)$ of $C(X)$, we have, as a consequence of the previous theorem, the following result.

\begin{corollary}\label{dimension}
Let $X$ be a finite graph and $p\in X$. If $A\in C(X)-C(p,X)$, then $$\dim_{[A]}(HS(p,X))=2+\displaystyle\sum_{r\in R(X)\cap A}(\textrm{ord}(r,X)-2).$$ 
\end{corollary}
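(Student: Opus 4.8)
The plan is to deduce Corollary~\ref{dimension} directly from Theorem~\ref{veronica}(1) by transporting the local dimension computation across the homeomorphism supplied by Remark~\ref{complements}. The key observation is that local (inductive) dimension is a local topological invariant: if two spaces are homeomorphic via $\varphi$ near corresponding points, then the local dimension at a point equals the local dimension at its image. Since $HS(p,X)-\{C^{X}_{p}\}$ is homeomorphic to $C(X)-C(p,X)$ through the restriction of $\pi^{X}_{p}$, and since for $A\in C(X)-C(p,X)$ the point $[A]=\pi^{X}_{p}(A)$ corresponds exactly to $A$ under this homeomorphism, we expect
\[
\dim_{[A]}\bigl(HS(p,X)-\{C^{X}_{p}\}\bigr)=\dim_{A}\bigl(C(X)-C(p,X)\bigr).
\]

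First I would argue that $\dim_{[A]}(HS(p,X))=\dim_{[A]}(HS(p,X)-\{C^{X}_{p}\})$. This is the step where I would invoke the remark, already used in the proof of Lemma~\ref{dimensionleq}, that $C(X)-C(p,X)$ is an open subset of $C(X)$; hence $HS(p,X)-\{C^{X}_{p}\}$ is open in $HS(p,X)$ and is an open neighborhood of $[A]$. Because local dimension at a point is computed from arbitrarily small neighborhoods, deleting the single point $C^{X}_{p}$ (which lies outside some neighborhood of $[A]$, as $[A]\neq C^{X}_{p}$) does not affect the local dimension at $[A]$. Symmetrically, since $C(X)-C(p,X)$ is open in $C(X)$ and contains $A$, we have $\dim_{A}(C(X)-C(p,X))=\dim_{A}(C(X))$.

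Combining these two equalities with the homeomorphism gives
\[
\dim_{[A]}(HS(p,X))=\dim_{A}(C(X)),
\]
and then Theorem~\ref{veronica}(1) evaluates the right-hand side as $2+\sum_{r\in R(X)\cap A}(\mathrm{ord}(r,X)-2)$, which is precisely the claimed formula.

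I expect the main (though modest) obstacle to be the careful justification that local dimension is preserved under restriction to an open neighborhood and under a homeomorphism of open sets. Concretely, one must confirm that $\dim_{A}(U)=\dim_{A}(C(X))$ whenever $U$ is open in $C(X)$ with $A\in U$; this is standard for inductive dimension but should be cited (it follows from the definition of $\dim_{A}$ via neighborhood bases, as in the Hurewicz--Wallman framework referenced earlier in the paper). Everything else is bookkeeping: matching the point $A$ with $[A]$ under $\pi^{X}_{p}$ and reading off the formula. No delicate computation is required, since the quantitative content is entirely absorbed into Theorem~\ref{veronica}.
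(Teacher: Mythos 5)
Your proposal is correct and takes essentially the same route as the paper: the paper deduces the corollary in one sentence from Theorem~\ref{veronica}(1), observing that $HS(p,X)-\{C^{X}_{p}\}$ is homeomorphic (via Remark~\ref{complements}) to the \emph{open} subset $C(X)-C(p,X)$ of $C(X)$, so that the local dimension at $[A]$ equals $\dim_{A}(C(X))$. You merely make explicit the standard facts the paper leaves implicit, namely that local inductive dimension is unchanged when passing to an open neighborhood and is preserved by the homeomorphism matching $A$ with $[A]$.
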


Note that, if $X$ is a finite graph and $p\in R(X)$, then for each $A\in C(X)-C(p,X)$, it holds that $$\displaystyle\sum_{r\in R(X)\cap A}(\textrm{ord}(r,X)-2)<\displaystyle\sum_{r\in R(X)}(\textrm{ord}(r,X)-2).$$ 
From the above inequality, Theorem \ref{veronica} and Corollary \ref{dimensionleq}, we obtain the following.

\begin{corollary} \label{dimensionramification}
Let $X$ by a finite graph. Then, for each $p\in R(X)$ it holds that $\dim(HS(p,X))<\dim C(X)$.
\end{corollary}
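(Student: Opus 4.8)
The plan is to combine the dimension formula of Theorem~\ref{veronica} with the local dimension formula for $HS(p,X)$ given in Corollary~\ref{dimension}, using the strict inequality displayed just before the statement. The key observation is that for a finite graph, the dimension of $C(X)$ is attained at the ``top'' element $X$ itself, where all ramification points contribute.

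\begin{proof}[Proof proposal]
First I would recall that by Theorem~\ref{veronica}(2), $\dim(C(X))=\dim_{X}(C(X))$, and by Theorem~\ref{veronica}(1) applied to $A=X$,
$$\dim(C(X))=2+\sum_{r\in R(X)\cap X}(\textrm{ord}(r,X)-2)=2+\sum_{r\in R(X)}(\textrm{ord}(r,X)-2),$$
since $R(X)\cap X=R(X)$. This identifies $\dim(C(X))$ with the full ramification sum.

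Next I would bound $\dim(HS(p,X))$ from above by taking the supremum of the local dimensions over all points. Since $HS(p,X)$ is a finite graph hyperspace quotient, its dimension equals the maximum of its local dimensions; the only point requiring separate attention is the collapsed point $C^{X}_{p}$. For every $[A]\neq C^{X}_{p}$, i.e. for every $A\in C(X)-C(p,X)$, Corollary~\ref{dimension} gives
$$\dim_{[A]}(HS(p,X))=2+\sum_{r\in R(X)\cap A}(\textrm{ord}(r,X)-2).$$
Because $p\in R(X)$ and $p\notin A$ (as $A\in C(X)-C(p,X)$ means $p\notin A$), the point $p$ contributes the positive quantity $\textrm{ord}(p,X)-2\geq 1$ to the full sum but is excluded from the sum over $R(X)\cap A$. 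Hence, invoking the strict inequality stated just before the corollary,
$$\dim_{[A]}(HS(p,X))=2+\sum_{r\in R(X)\cap A}(\textrm{ord}(r,X)-2)<2+\sum_{r\in R(X)}(\textrm{ord}(r,X)-2)=\dim(C(X))$$
for every such $A$. Taking the supremum over $A\in C(X)-C(p,X)$, and using Lemma~\ref{dimensionleq} (which already gives $\dim(HS(p,X))=\dim(C(X)-C(p,X))$), we conclude $\dim(HS(p,X))<\dim(C(X))$.

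The main obstacle I expect is the treatment of the collapsed point $C^{X}_{p}$, since Corollary~\ref{dimension} only describes local dimensions at points $[A]$ with $A\notin C(p,X)$ and says nothing directly about $\dim_{C^{X}_{p}}(HS(p,X))$. To handle this cleanly I would rely on the fact, used in Lemma~\ref{dimensionleq}, that $\dim(HS(p,X))=\dim(HS(p,X)-\{C^{X}_{p}\})=\dim(C(X)-C(p,X))$ via the cited results of Hurewicz--Wallman; this reduces the computation entirely to the open dense subset $C(X)-C(p,X)$, where Corollary~\ref{dimension} applies and every local dimension is strictly below $\dim(C(X))$. Thus the single collapsed point never inflates the dimension, and the strict inequality survives passage to the supremum because the finite graph forces the dimensions to be bounded and attained.
\end{proof}
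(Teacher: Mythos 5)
Your proposal is correct and takes essentially the same route as the paper, which derives the corollary precisely from the displayed strict inequality, Theorem~\ref{veronica}, and the local dimension formula of Corollary~\ref{dimension}, with the collapsed point $C^{X}_{p}$ handled exactly as you do, via the Hurewicz--Wallman facts from Lemma~\ref{dimensionleq} giving $\dim(HS(p,X))=\dim(HS(p,X)-\{C^{X}_{p}\})=\dim(C(X)-C(p,X))$. Your extra remarks (that $\mathrm{ord}(p,X)-2\geq 1$ forces the strict gap and that the supremum of the integer-valued local dimensions stays at most $\dim(C(X))-1$) just make explicit what the paper leaves implicit.
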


A \textit{cut point} in $X$ means a point $p\in X$ such that $X-\{p\}$ is not connected, otherwise we say that $p$ {\it does not cut} $X$.

For a finite graph $X$, $R(X)=\emptyset$ if and only if $X$ is an arc or a simple closed curve (see \cite[Theorem 70.1, p. 337]{IllanesNadler(1999)}). Hence, by using examples \ref{arco} and \ref{circle}, and Corollary \ref{dimensionramification}, we get the following result.

\begin{theorem}
Let $X$ be a finite graph. Then:
\begin{enumerate}
\item $\dim(HS(p,X))=\dim(C(X))$ for each $p\in X$ if and only if $X$ is an arc or a simple closed curve.
\item $HS(p,X)$ is homeomorphic to $C(X)$ for each $p\in X$, if and only if $X$ is a simple closed curve.
\item $\dim(HS(p,X))=\dim(C(X))$ for each $p\in X$ and there exists $q\in X$ such that $HS(q,X)$ contains cut points, if and only if $X$ is an arc.
\end{enumerate}
\end{theorem}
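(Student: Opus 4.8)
The plan is to prove the three equivalences by combining the dichotomy that, for a finite graph, $R(X)=\emptyset$ if and only if $X$ is an arc or a simple closed curve, with the explicit models of Examples~\ref{arco} and \ref{circle} and the strict dimension drop of Corollary~\ref{dimensionramification}; the only genuinely topological input beyond dimension counting is a cut-point argument, which I isolate at the end.

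For item (1) I would split along the dichotomy. If $X$ is an arc or a simple closed curve, then $R(X)=\emptyset$, and Examples~\ref{arco} and \ref{circle} give $\dim(HS(p,X))=\dim(C(X))$ for every $p$ (for the circle $HS(p,X)\cong C(X)$ outright, and for the arc both geometric descriptions of $HS(p,X)$ are two-dimensional, matching $\dim C([0,1])=2$). Conversely, if $X$ is neither, then $R(X)\neq\emptyset$; choosing any $p\in R(X)$, Corollary~\ref{dimensionramification} gives $\dim(HS(p,X))<\dim C(X)$, so the equality fails for this $p$. This establishes the contrapositive of the forward direction.

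For item (2) the backward implication is precisely Example~\ref{circle}. For the forward implication I would first feed the hypothesis into item (1): if $HS(p,X)\cong C(X)$ for every $p$, then the dimensions agree for every $p$, so $X$ is an arc or a simple closed curve. To exclude the arc, take an interior point $q\in(0,1)$ of $X=[0,1]$: by Example~\ref{arco}, $HS(q,X)$ is the wedge of two $2$-cells at the point $C^{X}_{q}$, so $C^{X}_{q}$ is a cut point, whereas $C([0,1])$ is a $2$-cell and has no cut point; since homeomorphisms preserve cut points, $HS(q,X)\not\cong C(X)$. Hence $X$ must be a simple closed curve. Item (3) runs on the same two inputs: for the backward direction an arc gives the dimension equality via item (1) and, taking $q\in(0,1)$, the wedge point $C^{X}_{q}$ of Example~\ref{arco} is a cut point of $HS(q,X)$; for the forward direction, item (1) again forces $X$ to be an arc or a simple closed curve, and the simple closed curve is excluded because Example~\ref{circle} would make every $HS(q,X)$ homeomorphic to the cut-point-free $2$-cell $C(S^{1})$, contradicting the existence of some $q$ with $HS(q,X)$ containing a cut point.

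The main (and really the only) obstacle is the cut-point bookkeeping that distinguishes a single $2$-cell from a wedge of two $2$-cells: one must verify that removing $C^{X}_{q}$ from $HS(q,X)$ disconnects it---immediate from the model in Example~\ref{arco}, where the two complementary triangular regions of $C(q,X)$ inside $h(C(X))$ become the two cells and their common collapsed point is exactly $C^{X}_{q}$---and that no single point disconnects a $2$-cell. Because cut points are a topological invariant, this one observation simultaneously drives the forward directions of (2) and (3), while everything else reduces to invoking the dichotomy, the two worked examples, and Corollary~\ref{dimensionramification}.
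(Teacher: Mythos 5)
Your proposal is correct and follows essentially the same route as the paper, which likewise derives all three items from the dichotomy that $R(X)=\emptyset$ if and only if $X$ is an arc or a simple closed curve, combined with Examples~\ref{arco} and \ref{circle} and the strict dimension drop of Corollary~\ref{dimensionramification}. Your explicit cut-point bookkeeping (a wedge of two $2$--cells has a cut point, a $2$--cell does not) merely spells out what the paper leaves implicit in its citation of the two examples.
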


\section{Basic properties of $HS(p,X)$}

Given a nonempty set $U\subset X$, let $C(U):=\{A\in C(X): A\subset U\}$. It is known that, if $U$ is open, then $C(U)$ is open (see \cite[1.1, p. 3]{IllanesNadler(1999)}). 

\begin{lemma}\label{Cconnected}
If $U$ is connected then $C(U)$ is connected. 
\end{lemma}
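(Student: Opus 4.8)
The plan is to reduce the connectedness of $C(U)$ to two ingredients: that the one-point members of $C(U)$ form a connected subset, and that every member of $C(U)$ can be joined to one of those singletons by an order arc that never leaves $C(U)$. Write $S=\{\{x\}:x\in U\}$ for the set of singletons contained in $U$, regarded as a subset of $C(U)$.

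First I would check that $S$ is connected. The map $\iota:U\to C(X)$ given by $\iota(x)=\{x\}$ satisfies $H(\{x\},\{y\})=d(x,y)$, so it is an isometric embedding of $U$ onto $S$. As $U$ is connected by hypothesis, its image $S$ is connected as well.

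Next I would use order arcs to attach each member of $C(U)$ to $S$. Fix $A\in C(U)$. If $A$ is a singleton, then $A\in S$ and there is nothing to do; put $K_A=\{A\}$. Otherwise choose a point $a\in A$; since $\{a\}\subset A\neq\{a\}$, there is an order arc $\alpha:[0,1]\to C(X)$ with $\alpha(0)=\{a\}$ and $\alpha(1)=A$. Every value satisfies $\{a\}\subset\alpha(t)\subset A\subset U$, hence $\alpha(t)\in C(U)$; thus $K_A:=\alpha([0,1])$ is a connected subset of $C(U)$ that contains $A$ and meets $S$ at $\{a\}$.

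Finally I would assemble the pieces. Since $A\in K_A$ for every $A$, we have $C(U)=\bigcup_{A\in C(U)}K_A$, where each $K_A$ is connected and each $K_A$ meets the connected set $S$. By the standard fact that a union of connected sets, each meeting a fixed connected set, is connected, it follows that $C(U)$ is connected. The one point requiring care is the claim that the order arc from $\{a\}$ to $A$ remains inside $C(U)$: this is exactly where the monotonicity of order arcs, forcing each $\alpha(t)$ to lie between $\{a\}$ and $A$, combines with $A\subset U$ to keep the whole arc in $C(U)$, and it is the only place the inclusion $A\subset U$ (rather than merely $A\cap U\neq\emptyset$) is used.
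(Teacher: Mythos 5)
Your proof is correct and follows essentially the same route as the paper: the paper also writes $C(U)$ as the union of order arcs $\alpha_A([0,1])$ from a singleton $\{a\}\subset A$ to each $A\in C(U)$, together with $F_1(U)$ (your set $S$), and concludes connectedness from the fact that each arc meets this connected set of singletons. Your write-up is in fact slightly more careful than the paper's, since you handle the degenerate case $A=\{a\}$ (where no order arc exists) and explicitly verify that each $\alpha(t)$ lies in $C(U)$.
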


\begin{proof}
Since  $C(\emptyset)=\emptyset$ is connected, we can suppose that $U\neq \emptyset$. Let $A\in C(U)$ and take an arbitrary point $a\in A$. Let $\alpha_{A}:[0,1]\to C(X)$ be an ordered arc from $\{a\}$ to $A$. It follows from this that $C(U)=\displaystyle\bigcup_{A\in C(U)}\alpha_{A}([0,1])\cup F_{1}(U)$. We conclude that $C(U)$ is connected. 
\end{proof}

\begin{proposition}\label{cutpoints}
A point $p\in X$ is a cut point of $X$ if and only if $C^{X}_{p}$ is a cut point of $HS(p,X)$. 
\end{proposition}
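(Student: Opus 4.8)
The plan is to reduce the whole statement to a connectedness question about the open subset $C(X)-C(p,X)$ of $C(X)$. By Remark~\ref{complements} the restriction of $\pi^X_p$ gives a homeomorphism $HS(p,X)-\{C^X_p\}\cong C(X)-C(p,X)$, and $HS(p,X)$ is a continuum; hence $C^X_p$ is a cut point of $HS(p,X)$ precisely when $HS(p,X)-\{C^X_p\}$ is disconnected, i.e.\ precisely when $C(X)-C(p,X)$ is disconnected. So it suffices to prove that $p$ cuts $X$ if and only if $C(X)-C(p,X)$ is disconnected. The computational backbone of both implications is the elementary identity
\[
C(X)-C(p,X)=C(X-\{p\}),
\]
valid because a subcontinuum $A\in C(X)$ misses $p$ exactly when $A\subset X-\{p\}$; note also that $X-\{p\}$ is open in $X$ since singletons are closed.

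For the forward direction I would assume $p$ is a cut point, so $X-\{p\}=U\cup V$ with $U,V$ nonempty, disjoint, and open in $X-\{p\}$, hence open in $X$. Any $A\in C(X-\{p\})$ is connected and contained in $U\cup V$, so $A\subset U$ or $A\subset V$; this gives $C(X)-C(p,X)=C(U)\cup C(V)$. The two pieces are disjoint (since $U\cap V=\emptyset$), each is nonempty (each contains, say, a singleton), and each is open in $C(X)$ by the fact recalled just before Lemma~\ref{Cconnected} that $C(U),C(V)$ are open whenever $U,V$ are open. This exhibits a disconnection of $C(X)-C(p,X)$, so $C^X_p$ is a cut point of $HS(p,X)$.

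For the converse I would argue by contraposition: if $p$ does not cut $X$, then $X-\{p\}$ is connected, so Lemma~\ref{Cconnected} yields that $C(X-\{p\})=C(X)-C(p,X)$ is connected, whence $HS(p,X)-\{C^X_p\}$ is connected and $C^X_p$ is not a cut point. I do not expect any serious obstacle here; the content is light and leans entirely on Remark~\ref{complements} and Lemma~\ref{Cconnected}. The only points requiring care are genuinely routine: verifying the set identity $C(X)-C(p,X)=C(X-\{p\})$, checking that the separating sets $U,V$ may be taken open in $X$ (not merely in $X-\{p\}$), and confirming that $\{C(U),C(V)\}$ is an honest relatively open, nonempty partition of $C(X)-C(p,X)$.
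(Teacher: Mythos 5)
Your proposal is correct and follows essentially the same route as the paper: both reduce the statement via Remark~\ref{complements} to the connectedness of $C(X)-C(p,X)=C(X-\{p\})$, use the open disjoint decomposition $C(U)\cup C(V)$ when $p$ cuts $X$, and invoke Lemma~\ref{Cconnected} for the converse. Your write-up is if anything slightly more explicit than the paper's (e.g.\ in stating the identity $C(X)-C(p,X)=C(X-\{p\})$ and checking that $U,V$ are open in $X$), but there is no substantive difference.
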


\begin{proof}
First, suppose that $X-\{p\}=A\cup B$, where $A$ and $B$ are two nonempty disjoint open subsets of $X$. In this case, by Lemma \ref{Cconnected}, $C(X)-C(p,X)$ is not connected because it is the union of the open nonempty sets $C(A)$ and $C(B)$ which are disjoint. By Remark \ref{complements}, we conclude that $HS(p,X)-\{C^{X}_{p}\}$ is not connected.\\
If $X-\{p\}$ is connected, then $C(X-\{p\})=C(X)-C(p,X)$ is connected. Again, by Remark \ref{complements}, this implies that $HS(p,X)-\{C^{X}_{p}\}$ is connected. Therefore, $HS(p,X)-\{C^{X}_{p}\}$ not connected implies $X-\{p\}$ not connected.
\end{proof}

Recall that any $C(X)$ does not have cut points (see \cite[Exercise 6.8, p. 100]{Nadler(1992)}). From this and  Proposition \ref{cutpoints} we get the following.

\begin{corollary}
Let $p\in X$. If $C(X)$ is homeomorphic to $HS(p,X)$, then $X-\{p\}$ is connected.
\end{corollary}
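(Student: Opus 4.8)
The plan is to prove the contrapositive. We want to show that if $C(X)$ is homeomorphic to $HS(p,X)$, then $X-\{p\}$ is connected. Equivalently, I would argue that if $X-\{p\}$ were \emph{not} connected, then $C(X)$ and $HS(p,X)$ could not be homeomorphic. The key observation that makes this work is the combination of the two facts recalled just before the statement: first, by \cite[Exercise 6.8, p. 100]{Nadler(1992)}, the hyperspace $C(X)$ has \emph{no} cut points whatsoever; second, by Proposition \ref{cutpoints}, the point $p$ cutting $X$ is equivalent to $C^{X}_{p}$ being a cut point of $HS(p,X)$.

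So the argument is short. Suppose $C(X)$ is homeomorphic to $HS(p,X)$ and suppose, for contradiction, that $X-\{p\}$ is not connected, i.e.\ $p$ is a cut point of $X$. Then by Proposition \ref{cutpoints}, $C^{X}_{p}$ is a cut point of $HS(p,X)$. Since having a cut point is a topological invariant and $C(X)\cong HS(p,X)$, it would follow that $C(X)$ has a cut point as well. But this contradicts the fact, recalled above, that $C(X)$ has no cut points. Hence $X-\{p\}$ must be connected, which is exactly the conclusion of the corollary.

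There is essentially no obstacle here; the corollary is a direct synthesis of the two immediately preceding results, and the only thing to be careful about is the logical direction. One should note that the hypothesis $C(X)\cong HS(p,X)$ is used only to \emph{transport} cut points from one space to the other, and that being a cut point is preserved under homeomorphism. I would therefore phrase the proof so as to make explicit that a homeomorphism $C(X)\to HS(p,X)$ would carry the (hypothetical) cut point structure across, producing the forbidden cut point in $C(X)$.

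As an alternative, purely for exposition, one could argue directly rather than by contradiction: assuming $C(X)\cong HS(p,X)$, the space $HS(p,X)$ inherits from $C(X)$ the property of having no cut points, so in particular $C^{X}_{p}$ is not a cut point of $HS(p,X)$; applying Proposition \ref{cutpoints} then yields immediately that $p$ is not a cut point of $X$, i.e.\ $X-\{p\}$ is connected. Either formulation is clean, and I would favor whichever reads most transparently given that both supporting facts are already in hand.
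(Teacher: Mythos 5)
Your proof is correct and is essentially the paper's own argument: the corollary is stated there as an immediate consequence of Proposition \ref{cutpoints} together with the fact that $C(X)$ has no cut points, exactly the synthesis you describe. Your direct (non-contradiction) formulation at the end matches the intended reading most closely.
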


We say that $X$ is \textit{uniformly pathwise connected} provided that it is the continuous image of the cone over the Cantor set (i.e. Cantor fan) \cite[3.5]{Kuperberg(1975)}. By using that $\pi^{X}_{p}$ is a surjective mapping and that $C(X)$ is a continuous image of the Cantor fan (see e.g. \cite[17.10]{IllanesNadler(1999)}), we have the following result.

\begin{theorem}
$HS(p,X)$ is uniformly pathwise connected for any $p\in X$.
\end{theorem}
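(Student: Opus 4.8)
The plan is to exhibit $HS(p,X)$ directly as a continuous image of the Cantor fan and to use that the class of such images is closed under continuous surjections. First I would invoke the cited fact that $C(X)$ is a continuous image of the Cantor fan; that is, writing $\mathcal{C}$ for the Cantor set, there is a surjective mapping $f\colon\mathrm{cone}(\mathcal{C})\to C(X)$.

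Next I would form the composition $g=\pi^{X}_{p}\circ f\colon\mathrm{cone}(\mathcal{C})\to HS(p,X)$. Since $\pi^{X}_{p}$ is a quotient mapping it is in particular continuous, and $f$ is continuous, so $g$ is continuous. Because both $\pi^{X}_{p}$ and $f$ are surjective, their composition $g$ is surjective as well. Hence $HS(p,X)$ is the continuous image of the Cantor fan $\mathrm{cone}(\mathcal{C})$, which is precisely the definition of being uniformly pathwise connected.

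This argument has essentially no obstacle: the only ingredients are the continuity and surjectivity of $\pi^{X}_{p}$ (both immediate from its being a quotient map onto $HS(p,X)$) together with the quoted representation of $C(X)$ as an image of the Cantor fan. The one conceptual point to keep in view is that uniform pathwise connectedness, being defined through surjective images of a fixed space, is automatically preserved under continuous surjections; this is exactly what the composition $g=\pi^{X}_{p}\circ f$ realizes, and it completes the proof for every $p\in X$.
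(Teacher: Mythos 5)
Your proof is correct and follows exactly the paper's route: the authors likewise compose the surjection $\mathrm{cone}(\mathcal{C})\to C(X)$ (citing that $C(X)$ is a continuous image of the Cantor fan) with the quotient mapping $\pi^{X}_{p}$ to exhibit $HS(p,X)$ as a continuous image of the Cantor fan. Nothing is missing; your remark that uniform pathwise connectedness is preserved under continuous surjections is precisely the observation the paper uses implicitly.
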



Let $f:X\to Y$ be a mapping between continua. The \textit{induced} mapping $C(f):C(X)\to C(Y)$ is given by $C(f)(A)=f(A)$ (see \cite{Hosokawa(1997)}). Observe that, for each $p\in X$, we have a unique mapping $HS(p,f):HS(p,X)\to HS(f(p),Y)$ such that $\pi^Y_{f(p)}\circ C(f)=HS(p,f)\circ \pi^X_p$, i.e., the following diagram commutes (see \cite[Theorem 4.3, p.~126]{Dugundji(1966)}).

\[
\xymatrix{& C(X)\ar[rrr]^{C(f)} \ar[dd]_{\pi^X_p}& & &  C(Y)\ar[dd]^{\pi^Y_{f(p)}}\\
	& & & & &\\
	& HS(p,X)\ar[rrr]_{HS(p,f)}& & &  HS(f(p),Y)}
\]




\begin{theorem}[Functorial properties]
Let $f:X\to Y$ and $h:Y\to Z$ be mappings between continua and $p\in X$. It holds
\begin{enumerate}
 \item $HS(f(p),h)\circ HS(p,f)=HS(p,h\circ f)$.
 \item If $i:X\to X$ is the identity mapping, then $HS(p,i)$ is the identity homeomorphism.
\end{enumerate}
\end{theorem}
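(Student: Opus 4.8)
The plan is to derive both identities purely formally from the defining commutative square of the induced maps, together with the elementary fact that the hyperspace assignment $C(\cdot)$ is a covariant functor. The engine of the argument is that each quotient map $\pi^X_p$ is surjective, hence right-cancellable: if two maps out of $HS(p,X)$ become equal after precomposition with $\pi^X_p$, they are already equal. Thus, to check that two maps on $HS(p,X)$ coincide it suffices to verify the corresponding equality of maps on $C(X)$ after composing with the relevant quotient maps.

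First I would record the functoriality of $C(\cdot)$: since $C(f)(A)=f(A)$, one has $C(h\circ f)(A)=(h\circ f)(A)=h(f(A))=C(h)(C(f)(A))$, whence $C(h\circ f)=C(h)\circ C(f)$, and likewise $C(i)=\mathrm{id}_{C(X)}$ when $i$ is the identity. I would also note that containment of the base point is preserved, i.e. $C(f)$ carries $C(p,X)$ into $C(f(p),Y)$, which is exactly what makes $\pi^Y_{f(p)}\circ C(f)$ constant on $C(p,X)$ and thereby guarantees the existence of $HS(p,f)$; only its uniqueness is used below.

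For part (1), I would compose the two candidate maps with $\pi^X_p$ and apply the defining squares twice. Writing $q=f(p)$, the defining relations give $HS(p,f)\circ\pi^X_p=\pi^Y_{q}\circ C(f)$ and $HS(q,h)\circ\pi^Y_{q}=\pi^Z_{h(q)}\circ C(h)$. Chaining these and then invoking functoriality of $C$ yields
\[
\bigl(HS(q,h)\circ HS(p,f)\bigr)\circ\pi^X_p=\pi^Z_{h(q)}\circ C(h)\circ C(f)=\pi^Z_{h(q)}\circ C(h\circ f).
\]
On the other hand, the defining square for $h\circ f$ gives $HS(p,h\circ f)\circ\pi^X_p=\pi^Z_{(h\circ f)(p)}\circ C(h\circ f)$, and since $(h\circ f)(p)=h(q)$ the two right-hand sides agree. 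Cancelling the surjection $\pi^X_p$ on the right gives $HS(f(p),h)\circ HS(p,f)=HS(p,h\circ f)$. For part (2), since $i(p)=p$ and $C(i)=\mathrm{id}_{C(X)}$, the defining square reads $HS(p,i)\circ\pi^X_p=\pi^X_p\circ C(i)=\pi^X_p=\mathrm{id}_{HS(p,X)}\circ\pi^X_p$; right-cancelling $\pi^X_p$ shows $HS(p,i)=\mathrm{id}_{HS(p,X)}$, which is of course a homeomorphism.

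I do not anticipate a genuine obstacle here: the content is a routine diagram chase, and the only point demanding a moment's care is verifying that $C(\cdot)$ is a functor and that $C(f)(C(p,X))\subset C(f(p),Y)$, so that the induced maps are well defined and the right-cancellation step is legitimate. Everything else is formal.
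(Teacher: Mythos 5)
Your proposal is correct and takes essentially the same route as the paper: the paper proves both identities by evaluating on an arbitrary class $[A]=\pi^{X}_{p}(A)$, chaining the defining relations $HS(p,f)\circ\pi^{X}_{p}=\pi^{Y}_{f(p)}\circ C(f)$ with the functoriality facts $C(h\circ f)=C(h)\circ C(f)$ and $C(i)=\mathrm{id}_{C(X)}$, which is exactly your diagram chase carried out pointwise. The only cosmetic difference is that you phrase the last step as right-cancellation of the surjection $\pi^{X}_{p}$, whereas the paper quantifies over representatives $A\in C(X)$.
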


\begin{proof}
For each $[A]\in HS(p,X)$, 
\begin{eqnarray*}
HS(p,h\circ f)([A]) &=& \pi^{Z}_{h(f(p))}(h(f(A))) \\
& = & HS(f(p),h)(\pi^{Y}_{f(p)}(f(A))) \\
& = & HS(f(p),h)(HS(p,f)([A])),
\end{eqnarray*}
this conclude 1. \\
For the second part, observe that 
$$HS(p,i)([A])=\pi^{X}_{p}(i(A))=\pi^{X}_{p}(A)=[A].$$
\end{proof}

As an immediate consequence of the previous theorem, we get the following. 

\begin{corollary}\label{homeomorphism}
If $f:X\to Y$ is a homeomorphism sending $p\in X$ to $q\in Y$, then $HS(p,f):HS(p,X)\to HS(q,Y)$ is a homeomorphism.
\end{corollary}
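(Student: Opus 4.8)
The plan is to exploit the functoriality just established, together with the invertibility of a homeomorphism. Since $f$ is a homeomorphism, it admits a continuous inverse $f^{-1}\colon Y\to X$, which is itself a homeomorphism and satisfies $f^{-1}(q)=p$. By the construction preceding the functoriality theorem, both $HS(p,f)\colon HS(p,X)\to HS(q,Y)$ and $HS(q,f^{-1})\colon HS(q,Y)\to HS(p,X)$ are mappings, so it suffices to show that these two are mutually inverse.

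First I would apply part (1) of the previous theorem to the pair $f\colon X\to Y$ and $f^{-1}\colon Y\to X$. Since $f(p)=q$, this yields $HS(q,f^{-1})\circ HS(p,f)=HS(p,f^{-1}\circ f)$. Because $f^{-1}\circ f$ is the identity mapping on $X$, part (2) gives that the right-hand side is the identity homeomorphism of $HS(p,X)$. Symmetrically, applying part (1) to $f^{-1}\colon Y\to X$ and $f\colon X\to Y$, and using $f^{-1}(q)=p$, I would obtain $HS(p,f)\circ HS(q,f^{-1})=HS(q,f\circ f^{-1})$, which equals the identity of $HS(q,Y)$ by part (2), since $f\circ f^{-1}$ is the identity mapping on $Y$.

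From these two identities it follows that $HS(p,f)$ and $HS(q,f^{-1})$ are inverse to one another. As both are continuous, $HS(p,f)$ is a continuous bijection with continuous inverse, hence a homeomorphism. The only real care needed is the bookkeeping with base points: one must track how the distinguished point changes under composition, so that the index $f(p)$ appearing in $HS(f(p),f^{-1})$ is correctly identified with $q$, and likewise $f^{-1}(q)$ with $p$, before the identity-mapping cases of part (2) can be invoked. Beyond this indexing, I expect no genuine obstacle, since the entire argument is a purely formal consequence of the functorial identities.
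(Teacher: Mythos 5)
Your proposal is correct and follows exactly the route the paper intends: the paper derives the corollary as an immediate consequence of the functorial properties theorem, and your explicit verification that $HS(q,f^{-1})\circ HS(p,f)=HS(p,f^{-1}\circ f)=\mathrm{id}_{HS(p,X)}$ and $HS(p,f)\circ HS(q,f^{-1})=HS(q,f\circ f^{-1})=\mathrm{id}_{HS(q,Y)}$ is precisely the omitted argument. Your careful tracking of base points (identifying $f(p)$ with $q$ and $f^{-1}(q)$ with $p$ before invoking the identity case) is the only subtlety, and you handle it correctly.
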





The case of our interest lay in the class of trees.

\begin{corollary}\label{homeomorphics}
Let $X$ and $Y$ two trees, $p\in X$ and $q\in Y$. If $C(p,X)$ is homeomorphic to $C(q,Y)$ then $HS(p,X)$ is homeomorphic to $HS(q,Y)$.
\end{corollary}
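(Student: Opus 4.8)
The plan is to reduce the statement to the functorial Corollary~\ref{homeomorphism}, whose hypothesis is the existence of a \emph{pointed} homeomorphism $h\colon X\to Y$ with $h(p)=q$. Thus the whole problem amounts to upgrading the given homeomorphism $C(p,X)\cong C(q,Y)$ of pointed hyperspaces into a homeomorphism of the underlying trees that respects the distinguished points.

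First I would invoke the rigidity (determination) phenomenon for trees established in the works cited in the Introduction (\cite{CESV(2018)}, \cite{CEST(2019)}; cf.\ \cite{Pellicer(2003),Pellicer(2005),Pellicer(2005b)}): for trees $X,Y$ and points $p\in X$, $q\in Y$, a homeomorphism $C(p,X)\cong C(q,Y)$ forces the existence of a homeomorphism $h\colon X\to Y$ with $h(p)=q$. The underlying reason is that, for a tree, the local topology of $C(p,X)$ near the distinguished point $\{p\}$ and the pattern of cells and lower-dimensional ``flaps'' attached to it encode the orders of the ramification points of $X$ together with the position of $p$ relative to them (as is already visible in Examples~\ref{triod} and \ref{endpoints}). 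Hence the pointed hyperspace recovers the pointed tree $(X,p)$ up to homeomorphism.

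Granting such an $h$, I would apply Corollary~\ref{homeomorphism} directly: since $h$ is a homeomorphism sending $p$ to $q$, the induced mapping $HS(p,h)\colon HS(p,X)\to HS(q,Y)$ is a homeomorphism, which is exactly the desired conclusion. Concretely, the induced map $C(h)\colon C(X)\to C(Y)$ is a homeomorphism carrying $C(p,X)$ onto $C(q,Y)$, because $A\ni p$ if and only if $h(A)\ni h(p)=q$; consequently $C(h)$ descends to the quotients $C(X)/C(p,X)\to C(Y)/C(q,Y)$, and this descended map is precisely $HS(p,h)$.

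The main obstacle is the first step, the rigidity theorem asserting that $C(p,X)$ determines the pointed tree $(X,p)$. If it is available as a cited result, the corollary is immediate. Otherwise one must prove it, which requires a careful analysis of how the combinatorial data of the tree---the orders of its ramification points and the location of $p$ among them---can be read off from the topology of $C(p,X)$. That reconstruction is the genuinely delicate part, whereas the passage from a pointed homeomorphism of trees to a homeomorphism of the spaces $HS(p,X)$ is a purely formal consequence of the functoriality developed above.
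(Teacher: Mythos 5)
Your proposal is correct and takes essentially the same route as the paper: the rigidity step you invoke is precisely \cite[Theorem 4.14]{CEST(2019)}, which the paper cites to produce a homeomorphism $h\colon X\to Y$ with $h(p)=q$, after which Corollary~\ref{homeomorphism} finishes the argument exactly as you describe. The reconstruction of the pointed tree from $C(p,X)$ is indeed an available cited result, so your ``main obstacle'' does not arise.
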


\begin{proof}
By \cite[Theorem 4.14]{CEST(2019)}, we have that there exists a homeomorphism $h:X\to Y$ sending $p$ to 
$q$, now apply Corollary \ref{homeomorphism} to conclude.
\end{proof}

\begin{example}
The converse of the previous corollary is not true in general, for example, the graphs of examples 
\ref{triod} (case $n=3$) and \ref{paleta} satisfy that $HS(v,X)$ is not 
homeomorphic to $HS(p,Y)$ even though $C(v,X)$ and $C(p,Y)$ are homeomorphic.

\end{example}

Given a positive integer $n$, we denote by $\mathcal{U}_{n}(X)$ the set of all points $x\in X$ 
such that $\dim_{x}(X)\leq n$. In the case when $X$ is a finite graph, it is known (see \cite[Theorem 2.4, p. 
791]{Martinez(2006)}) that 
\[
\mathcal{U}_{n}(X)=\left\lbrace A\in C(X):n\geq 2+\sum_{p\in R(X)\cap 
A}(\textrm{ord}(p,X)-2)\right\rbrace ,
\]
consequently $\mathcal{U}_{2}(X)=\{A\in C(X):A\cap R(X)=\emptyset\}$.\\

For a topological space $Z$ we denote by $\pi_{0}(Z)$, as customary, the set of all connected components of 
$Z$ and by $\vert Z\vert$ its cardinality.

\begin{lemma}\label{dimensiones}
Let $X$ be a finite graph and $p\in O(X)$. If $R(X)\neq\emptyset$ then 
\[
 \vert\pi_{0}(\mathcal{U}_{2}(C(X))-C(p,X))\vert=\vert\pi_{0}(\mathcal{U}_{2}(HS(p,X))\vert.
\]
\end{lemma}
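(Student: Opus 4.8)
The plan is to reduce the stated equality to two facts about the single exceptional point $C^{X}_{p}$: first, that the two sets in question agree, up to homeomorphism, away from $C^{X}_{p}$; and second, that $C^{X}_{p}$ simply fails to belong to $\mathcal{U}_{2}(HS(p,X))$, so that deleting it costs nothing. Granting both, the homeomorphism of Remark \ref{complements} transports one $\pi_{0}$-count directly to the other, and I never need to compute either count explicitly.

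For the first fact, I would use the homeomorphism $\phi\colon C(X)-C(p,X)\to HS(p,X)-\{C^{X}_{p}\}$ given by $\phi(A)=[A]$, furnished by Remark \ref{complements}. For every $A\in C(X)-C(p,X)$, Theorem \ref{veronica} yields $\dim_{A}(C(X))=2+\sum_{r\in R(X)\cap A}(\textrm{ord}(r,X)-2)$, while Corollary \ref{dimension} assigns exactly the same number to $\dim_{[A]}(HS(p,X))$; hence $\dim_{A}(C(X))\leq 2$ precisely when $\dim_{[A]}(HS(p,X))\leq 2$. Consequently $\phi$ maps $\mathcal{U}_{2}(C(X))-C(p,X)$ bijectively onto $\mathcal{U}_{2}(HS(p,X))-\{C^{X}_{p}\}$, and being the restriction of a homeomorphism it is itself a homeomorphism between these subspaces, so the two have the same number of connected components.

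The hard part, and where I expect the real work, is the second fact: that $\dim_{C^{X}_{p}}(HS(p,X))>2$, equivalently $C^{X}_{p}\notin\mathcal{U}_{2}(HS(p,X))$. This is exactly where the hypotheses $p\in O(X)$ and $R(X)\neq\emptyset$ are used, since together they produce a ramification point $r\in R(X)$ with $r\neq p$. Choosing an arc $\beta\colon[0,1]\to X$ with $\beta(0)=p$ and $\beta(1)=r$ and setting $B=\beta([0,1])\in C(p,X)$, I would perturb $B$ to $A_{\delta}=\beta([\delta,1])$ for small $\delta>0$; then $r\in A_{\delta}$, $p\notin A_{\delta}$, and $H(A_{\delta},B)\to 0$. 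By Corollary \ref{dimension}, $\dim_{[A_{\delta}]}(HS(p,X))\geq\textrm{ord}(r,X)\geq 3$. Since any open neighbourhood $W$ of $C^{X}_{p}$ pulls back to an open saturated set $(\pi^{X}_{p})^{-1}(W)$ containing $B$, it contains $A_{\delta}$ for $\delta$ small, so $[A_{\delta}]\in W$. Thus every neighbourhood of $C^{X}_{p}$ contains a point of local dimension at least $3$, which forces $\dim(W)\geq 3$ for all such $W$ and hence $\dim_{C^{X}_{p}}(HS(p,X))\geq 3$.

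Finally I would assemble the pieces. Since $C^{X}_{p}\notin\mathcal{U}_{2}(HS(p,X))$ we have $\mathcal{U}_{2}(HS(p,X))=\mathcal{U}_{2}(HS(p,X))-\{C^{X}_{p}\}$, and combined with the first fact this gives $|\pi_{0}(\mathcal{U}_{2}(C(X))-C(p,X))|=|\pi_{0}(\mathcal{U}_{2}(HS(p,X))-\{C^{X}_{p}\})|=|\pi_{0}(\mathcal{U}_{2}(HS(p,X)))|$. The single delicate point to pin down rigorously is the local-dimension estimate at $C^{X}_{p}$, specifically the passage from ``every neighbourhood contains a $3$-dimensional point'' to ``$\dim_{C^{X}_{p}}\geq 3$'', which rests on monotonicity of the inductive dimension for subspaces and on the characterization of local dimension as the infimum of the dimensions of neighbourhoods.
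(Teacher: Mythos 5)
Your proof is correct and follows essentially the same route as the paper's: transfer the component count through the homeomorphism of Remark \ref{complements}, then observe that $C^{X}_{p}\notin\mathcal{U}_{2}(HS(p,X))$, so deleting that point changes neither the set $\mathcal{U}_{2}(HS(p,X))$ nor its components. The only difference is that you explicitly justify $\dim_{C^{X}_{p}}(HS(p,X))\geq 3$ via the perturbed arcs $A_{\delta}$ converging to an element of $C(p,X)$ through a ramification point, a step the paper dismisses as ``easy to see.''
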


\begin{proof}
Since $C(X)-C(p,X)$ and $HS(p,X)-\{C^{X}_{p}\}$ are homeomorphic, 
\[
 \vert\pi_{0}(\mathcal{U}_{2}(C(X))-C(p,X))\vert=\vert\pi_{0}(\mathcal{U}_{2}(HS(p,X)-\{C^{X}_{p}\})\vert.
\]
On the other hand, it is easy to see that, since $R(X)\neq \emptyset$ and $C^{X}_{p}\notin 
\mathcal{U}_{2}(HS(p,X))$, it holds $\mathcal{U}_{2}(HS(p,X)-\{C^{X}_{p}\})=\mathcal{U}_{2}(HS(p,X))$, which 
conclude this lemma.
\end{proof}

\begin{lemma}\label{domensiones}
Let $X$ be a finite graph and $p\in O(X)$. If $R(X)\neq \emptyset$, then $$\vert\pi_{0}\left( \mathcal{U}_{2}(C(X)-C(p,X))\right) \vert>\vert\pi_{0}(\mathcal{U}_{2}(C(X))\vert.$$
\end{lemma}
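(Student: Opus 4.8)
The plan is to evaluate both cardinalities explicitly from the description $\mathcal{U}_{2}(C(X))=\{A\in C(X):A\cap R(X)=\emptyset\}$ (the displayed formula with $n=2$) and to show that deleting $C(p,X)$ raises the number of components by exactly one. As a first reduction I would note that $C(p,X)$ is closed in $C(X)$, so $C(X)-C(p,X)$ is open; since local (inductive) dimension is unchanged under passage to an open subspace, we get $\mathcal{U}_{2}(C(X)-C(p,X))=\mathcal{U}_{2}(C(X))\cap(C(X)-C(p,X))=\mathcal{U}_{2}(C(X))-C(p,X)$. Thus it suffices to compare $\vert\pi_{0}(\mathcal{U}_{2}(C(X)))\vert$ with $\vert\pi_{0}(\mathcal{U}_{2}(C(X))-C(p,X))\vert$.

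Next I would decompose $\mathcal{U}_{2}(C(X))$. Let $e_{1},\dots,e_{k}$ be the finitely many components of $X-R(X)$. Any $A$ with $A\cap R(X)=\emptyset$ is connected and contained in $X-R(X)$, hence in a single $e_{i}$, so $\mathcal{U}_{2}(C(X))=\bigcup_{i=1}^{k}C(e_{i})$ is a disjoint union, and each $C(e_{i})$ is connected by Lemma \ref{Cconnected}. To see these are the actual connected components, I would check that $A\mapsto(\text{the }e_{i}\text{ containing }A)$ is locally constant on $\mathcal{U}_{2}(C(X))$: any such $A$ is compact and disjoint from the finite set $R(X)$, hence lies at positive Hausdorff distance from $R(X)$, so every continuum close to $A$ stays inside the same $e_{i}$. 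This yields $\vert\pi_{0}(\mathcal{U}_{2}(C(X)))\vert=\vert\pi_{0}(X-R(X))\vert=k$.

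I would then bring in the hypothesis $R(X)\neq\emptyset$. Every point of $X-R(X)$ has order at most $2$, so each $e_{i}$ is an arc or a simple closed curve; were some $e_{i}$ a simple closed curve, connectedness of $X$ would force $X=e_{i}$, making $X$ a simple closed curve with $R(X)=\emptyset$, contradicting the cited characterization $R(X)=\emptyset\iff X$ is an arc or a simple closed curve. Hence every $e_{i}$ is an arc. Let $e_{0}$ be the component containing $p$. The continua in $\mathcal{U}_{2}(C(X))$ that contain $p$ avoid $R(X)$ and contain $p\in O(X)$, so they all lie in $C(e_{0})$; therefore deleting $C(p,X)$ leaves each $C(e_{i})$ with $e_{i}\neq e_{0}$ untouched and replaces $C(e_{0})$ by $C(e_{0})-C(p,X)$. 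Since $p$ is ordinary, it is an interior point of the arc $e_{0}$, so $e_{0}-\{p\}$ splits into exactly two components $e_{0}^{-},e_{0}^{+}$; a subcontinuum of $e_{0}$ missing $p$ is connected, hence lies in one of them, giving the disjoint union $C(e_{0})-C(p,X)=C(e_{0}^{-})\cup C(e_{0}^{+})$ of two connected pieces (each connected by Lemma \ref{Cconnected}). Consequently $\vert\pi_{0}(\mathcal{U}_{2}(C(X))-C(p,X))\vert=k+1>k=\vert\pi_{0}(\mathcal{U}_{2}(C(X)))\vert$, which is the desired inequality.

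The step I expect to be most delicate is the local-constancy argument showing that the sets $C(e_{i})$ are genuine connected components of $\mathcal{U}_{2}(C(X))$, and not merely disjoint connected subsets; this is where the finiteness of $R(X)$ and the Hausdorff-metric topology are used essentially. Once that identification of components is secured, the remainder is bookkeeping with the finite-graph structure together with the order-$2$ hypothesis on $p$, which is exactly what produces the single extra component.
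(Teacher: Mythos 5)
Your proof is correct and reaches the paper's exact conclusion $\vert\pi_{0}(\mathcal{U}_{2}(C(X)-C(p,X)))\vert=\vert\pi_{0}(\mathcal{U}_{2}(C(X)))\vert+1$, but by a genuinely different decomposition. The paper localizes at the single edge $A$ containing $p$: it writes $\mathcal{U}_{2}(C(X))=W\cup\mathcal{U}_{2}(C(A))$ with $W$ the elements contained in $X-A$, splits $A$ into subarcs $A_{1},A_{2}$ with $A_{1}\cap A_{2}=\{p\}$, or $\{p,q\}$ with $q\in R(X)\cap A$ when the edge is a loop, and leaves as ``easy to see'' that $\mathcal{U}_{2}(C(A))$ is one component which deletion of $C(p,X)$ replaces by the two components $\mathcal{U}_{2}(C(A_{i})-C(p,A_{i}))$. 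You instead decompose globally over all components $e_{i}$ of $X-R(X)$, identify the $C(e_{i})$ as precisely the components of $\mathcal{U}_{2}(C(X))$ (open plus connected plus pairwise disjoint), and observe that removing $C(p,X)$ touches only $C(e_{0})$, splitting it as $C(e_{0}^{-})\cup C(e_{0}^{+})$. This buys three things the paper does not make explicit: it avoids the arc-versus-loop case analysis entirely (a loop edge's interior is still homeomorphic to an open interval, so $e_{0}-\{p\}$ has exactly two components regardless); it justifies the reduction $\mathcal{U}_{2}(C(X)-C(p,X))=\mathcal{U}_{2}(C(X))-C(p,X)$ via locality of inductive dimension on the open set $C(X)-C(p,X)$, a point the paper's notation silently conflates; and it gives the exact count $\vert\pi_{0}(\mathcal{U}_{2}(C(X)))\vert=\vert\pi_{0}(X-R(X))\vert$. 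Two small tightenings: the step you flag as delicate is in fact routine, since each $e_{i}$ is open in $X$ ($X$ is locally connected) and the paper already records that $C(U)$ is open in $C(X)$ whenever $U$ is open, which is exactly your local-constancy claim --- your distance-to-$R(X)$ argument alone only shows nearby continua avoid $R(X)$ and still needs the compactness observation that $N(\varepsilon,A)\subset e_{i}$ for small $\varepsilon$, so citing openness of $C(e_{i})$ is cleaner; and your $e_{i}$ are open or half-open arcs rather than arcs in the compact, continuum-theoretic sense, though the only property you use --- that deleting the order-two point $p$ leaves exactly two components, whose pieces $e_{0}^{\pm}$ are nonempty open connected subsets so that Lemma \ref{Cconnected} and openness of $C(e_{0}^{\pm})$ apply --- is valid as stated.
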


\begin{proof}
Let $A$ be the edge of $X$ containing $p$. Since $R(X)\neq\emptyset$, $X\neq A$, moreover $A\cap 
R(X)\neq\emptyset$. Consider $q\in R(X)\cap A$. Let $A_{1}$ and $A_{2}$ represent the unique subarcs of $A$ such 
that $A=A_{1}\cup A_{2}$ and, $A_{1}\cap A_{2}=\{p\}$ if $A$ is an arc, or, $A_{1}\cap A_{2}=\{p,q\}$ if $A$ 
is a simple closed curve. Now, set $W:=\{B\in \mathcal{U}_{2}(C(X)):B\subset X-A\}$. Thus $\mathcal{U}_{2}(C(X))=W\cup \mathcal{U}_{2}(C(A))$ and
\begin{center}
 $\mathcal{U}_{2}(C(X)-C(p,X))=W\cup \mathcal{U}_{2}(C(A_{1})-C(p,A_{1}))\cup \mathcal{U}_{2}(C(A_{2})-C(p,A_{2})).$
\end{center}
From this, it is easy to see that $\mathcal{U}_{2}(C(A_{1})-C(p,A_{1}))$ and $\mathcal{U}_{2}(C(A_{2})-C(p,A_{2}))$ are components of $\mathcal{U}_{2}(C(X)-C(p,X))$ and that $\mathcal{U}_{2}(C(A))$ is a component of $\mathcal{U}_{2}(C(X))$. Then, $$\vert\pi_{0}\left( \mathcal{U}_{2}(C(X)-C(p,X))\right) \vert=\vert\pi_{0}(\mathcal{U}_{2}(C(X))\vert+1,$$ which conclude the proof.
\end{proof}

By using lemmas \ref{dimensiones} and \ref{domensiones}, we get the next result.

\begin{corollary}\label{desigualdad}
Let $X$ be a finite graph and $p\in O(X)$. If $R(X)\neq\emptyset$, then $$\vert\pi_{0}(\mathcal{U}_{2}(HS(p,X))\vert>\vert\pi_{0}(\mathcal{U}_{2}(C(X))\vert,$$ in particular $HS(p,X)$ is not homeomorphic to $C(X)$.
\end{corollary}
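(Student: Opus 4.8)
The plan is to obtain the strict inequality by simply composing the two preceding lemmas, and then to upgrade it to the non-homeomorphism statement using that the number of connected components of the set $\mathcal{U}_2(\cdot)$ is a topological invariant. Before gluing the lemmas, I would first reconcile the two slightly different expressions that appear in them, namely $\mathcal{U}_2(C(X))-C(p,X)$ in Lemma \ref{dimensiones} and $\mathcal{U}_2(C(X)-C(p,X))$ in Lemma \ref{domensiones}. Since $C(X)-C(p,X)$ is an open subset of $C(X)$ (as recalled just before Lemma \ref{dimensiones}) and local inductive dimension is preserved on passing to an open subspace, for every $A\in C(X)-C(p,X)$ one has $\dim_{A}(C(X)-C(p,X))=\dim_{A}(C(X))$. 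Hence
\[
\mathcal{U}_2(C(X)-C(p,X))=\mathcal{U}_2(C(X))\cap (C(X)-C(p,X))=\mathcal{U}_2(C(X))-C(p,X),
\]
so the two expressions denote the same set and in particular share the same number of components.

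With this identification in hand, the chaining is immediate. Lemma \ref{dimensiones} gives $\vert\pi_{0}(\mathcal{U}_{2}(HS(p,X)))\vert=\vert\pi_{0}(\mathcal{U}_{2}(C(X))-C(p,X))\vert$, and combining this equality with the set identity above and with Lemma \ref{domensiones} yields
\[
\vert\pi_{0}(\mathcal{U}_{2}(HS(p,X)))\vert=\vert\pi_{0}(\mathcal{U}_{2}(C(X)-C(p,X)))\vert>\vert\pi_{0}(\mathcal{U}_{2}(C(X)))\vert,
\]
which is precisely the asserted inequality.

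For the final clause I would argue that a hypothetical homeomorphism $C(X)\to HS(p,X)$ must carry $\mathcal{U}_{2}(C(X))$ onto $\mathcal{U}_{2}(HS(p,X))$, because membership in $\mathcal{U}_{2}$ is governed by local dimension, which is a topological invariant; such a homeomorphism would then restrict to a homeomorphism between these two sets and force equality of their component counts, contradicting the strict inequality just established. The only step that genuinely requires care is the reconciliation in the first paragraph: one must invoke the open-subspace invariance of local dimension to legitimately identify the two differently-bracketed $\mathcal{U}_{2}$ expressions, since it is exactly this identity that allows Lemma \ref{dimensiones} and Lemma \ref{domensiones} to be glued together. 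Once that is granted, the remainder is purely formal.
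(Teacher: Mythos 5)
Your proposal is correct and takes essentially the same route as the paper, which obtains the corollary precisely by composing Lemmas \ref{dimensiones} and \ref{domensiones} and using that the component count of $\mathcal{U}_{2}(\cdot)$ is a topological invariant. Your explicit reconciliation of $\mathcal{U}_{2}(C(X))-C(p,X)$ with $\mathcal{U}_{2}(C(X)-C(p,X))$, via the openness of $C(X)-C(p,X)$ and the open-subspace invariance of local inductive dimension, is a valid filling-in of a detail the paper passes over silently.
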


\begin{theorem}\label{ordirarypoints}
Let $X$ be a finite graph and $p\in O(X)$. Then $C(X)$ is homeomorphic to $HS(p,X)$ if and only if $X$ is a simple closed curve.
\end{theorem}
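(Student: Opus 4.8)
The plan is to prove the two implications separately, leaning on the cut-point dichotomy of Proposition \ref{cutpoints} and the component-count inequality of Corollary \ref{desigualdad}. The overall strategy for the nontrivial direction is to whittle down the possible shapes of $X$: first eliminate graphs with ramification points, then eliminate the arc, leaving only the simple closed curve.

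For sufficiency ($\Leftarrow$), if $X$ is a simple closed curve then $R(X)=\emptyset$ and every point of $X$ has order $2$, so the hypothesis $p\in O(X)$ holds automatically; Example \ref{circle} already produces a homeomorphism $HS(p,X)\cong C(X)$, so there is nothing further to check.

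For necessity ($\Rightarrow$), I would assume $HS(p,X)\cong C(X)$ with $p\in O(X)$ and reduce the possibilities for $X$ in two steps. First, since $p\in O(X)$, Corollary \ref{desigualdad} asserts that whenever $R(X)\neq\emptyset$ the sets $\mathcal{U}_2(HS(p,X))$ and $\mathcal{U}_2(C(X))$ have different numbers of connected components, so $HS(p,X)$ and $C(X)$ cannot be homeomorphic. Taking the contrapositive, the assumption $HS(p,X)\cong C(X)$ forces $R(X)=\emptyset$. By the classification quoted from \cite[Theorem 70.1, p. 337]{IllanesNadler(1999)}, a finite graph with $R(X)=\emptyset$ is either an arc or a simple closed curve, so it only remains to exclude the arc.

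To rule out the arc I would argue with cut points. If $X$ were an arc and $p\in O(X)$, then $p$ is an interior point, so $X-\{p\}$ is disconnected, i.e. $p$ is a cut point of $X$; by Proposition \ref{cutpoints}, $C^X_p$ is then a cut point of $HS(p,X)$. But $C(X)$ has no cut points (as recorded before Proposition \ref{cutpoints}, citing \cite[Exercise 6.8, p. 100]{Nadler(1992)}), and possessing a cut point is a topological invariant, so $HS(p,X)$ could not be homeomorphic to $C(X)$, a contradiction. Hence $X$ must be a simple closed curve. The individual steps are short once the earlier results are available; the main obstacle is organizational, namely using the hypothesis $p\in O(X)$ correctly in both places — to guarantee $p$ is a cut point in the arc case, and to license the application of Corollary \ref{desigualdad}. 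An alternative to the final cut-point step would be to invoke Example \ref{arco} to identify $HS(p,X)$ with two $2$-cells joined at a point and note that this is not a $2$-cell, but the cut-point route is cleaner and avoids recomputing the geometric model.
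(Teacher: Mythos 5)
Your proposal is correct and follows essentially the same route as the paper's proof: sufficiency via Example~\ref{circle}, and necessity by combining Corollary~\ref{desigualdad} (to force $R(X)=\emptyset$ under the assumption $HS(p,X)\cong C(X)$ with $p\in O(X)$) with the classification of ramification-point-free finite graphs from \cite[Theorem 70.1, p.~337]{IllanesNadler(1999)}. The only divergence is in excluding the arc, where the paper appeals to the geometric model of Example~\ref{arco} while you use Proposition~\ref{cutpoints} together with the fact that $C(X)$ has no cut points; this is a harmless variation that in fact makes explicit a detail the paper leaves implicit, namely why two $2$-cells glued at a point cannot be homeomorphic to the $2$-cell $C([0,1])$.
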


\begin{proof}
By Example \ref{circle}, it is sufficient to show that if $C(X)$ is homeomorphic to $HS(p,X)$ then $X$ is a 
simple closed curve. Suppose on the contrary, that $X$ is not a simple closed curve and that there exists a 
homeomorphism $h:C(X)\to HS(p,X)$. From Example \ref{arco} and \cite[Theorem 70.1, p. 
337]{IllanesNadler(1999)}, we have that $R(X)\neq \emptyset$, but this is impossible by Corollary 
\ref{desigualdad}.
\end{proof}

\begin{example}\label{finales}
Example \ref{arco} shows a tree, the arc $[0,1]$, which satisfies that $0\in E([0,1])$ and $HS(0,[0,1])$ is 
homeomorphic to $C([0,1])$. We give now an example of a tree $X$, with a point $p\in E(X)$ such that 
$HS(p,X)$ is not homeomorphic to $C(X)$. Consider the following subspace of the Euclidean plane, $X=(\{0\}\times [-1,1])\cup ([0,1]\times \{0\})\cup (\{1\}\times 
[-1,1])\cup (\{\frac{1}{2}\}\times [-1,0])$. Put $p=(0,1)\in E(X)$. For each $n\in\mathbb{N}$ let:
\begin{itemize}
\item $A_{n}=(\{0\}\times [0,1-\frac{1}{n}])\cup ([0,1]\times \{0\})$, and
\item $B_{n}=\{0\}\times [0,1-\frac{1}{n}]$.
\end{itemize}
\[
\begin{tikzpicture}
\draw node at (-1,1) {\tiny $\bullet$} node at (-1,1) [left]{$p$} node at (0,-1.3){$X$};
\draw (-1,1) -- (-1,-1) (1,1) -- (1,-1) (-1,0)--(1,0) (0,0) -- (0,-1);
\end{tikzpicture} 
\hspace{3.5cm}
\begin{tikzpicture}[scale=1.1]
\draw[dotted] (-1,1) -- (-1,-1) (1,1) -- (1,-1) (-1,0)--(1,0) (0,0) -- (0,-1);
\draw[very thick] (-1,0.6) -- (-1,0) -- (1,0);
\draw node at (0,0.2){$A_n$};
\end{tikzpicture} 
\hspace{2cm}
\begin{tikzpicture}
\draw[dotted] (-1,1) -- (-1,-1) (1,1) -- (1,-1) (-1,0)--(1,0) (0,0) -- (0,-1);
\draw[very thick] (-1,0.8) -- (-1,0);
\draw node at (-0.6,0.4){$B_n$};
\end{tikzpicture} 
\]
 
By Corollary \ref{dimension}, $\textrm{dim}_{[A_{n}]}(HS(p,X))=5$ and $\textrm{dim}_{[B_{n}]}(HS(p,X))=3$. 
The sequences $\{[A_{n}]\}_{n\in\mathbb{N}}$ and $\{[B_{n}]\}_{n\in\mathbb{N}}$ converge to $C^{X}_{p}$ in 
$HS(p,X)$. 

If we suppose that $h:HS(p,X)\to C(X)$ is a homeomorphism, there should 
exists two sequences in $C(X)$, say $\{D_{n}\}_{n\in\mathbb{N}}$ and $\{F_{n}\}_{n\in\mathbb{N}}$, converging 
to the point $H:=h(C^{X}_{p})$ in such a way that $\textrm{dim}_{D_{n}}C(X)=5$ and $\textrm{dim}_{F_{n}}C(X)=3$. 
Therefore, for each $n$, 
$[0,1]\times \{0\}\subset D_{n}$ 
 and then $[0,1]\times \{0\}\subset H$, so there exists $m\in\mathbb{N}$ such that $(\frac{1}{2},0)\in F_{m}$, thus $\textrm{dim}_{F_{n}}C(X)>3$ which is impossible, this conclude that the homeomorphism $h$ can not exist.
  
\end{example}

\begin{example}
By using Corollary \ref{dimensionramification} we can see that if $X$ is a finite graph and $p\in R(X)$, then 
$C(X)$ is not homeomorphic to $HS(p,X)$.
\end{example}

Remember that an \textit{$n$--od} is a continuum $X$ such that there exists $A\in C(X)$ satisfying that $X-A$ 
has at lest $n$ different components.

\begin{theorem}
If $\dim(X)=1$, then $X$ contains an $n$--od if and only if there exists $p\in X$ such that $HS(p,X)$ contains an $n$--cell.
\end{theorem}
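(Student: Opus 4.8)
The plan is to treat the two implications separately, moving $n$-cells across the homeomorphism $C(X)-C(p,X)\cong HS(p,X)-\{C^{X}_{p}\}$ of Remark \ref{complements}; only the converse will really need $\dim X=1$. For the forward implication, suppose $X$ contains a subcontinuum $Y$ that is an $n$-od, so there is $A\in C(Y)$ with $Y-A$ having components $U_{1},\dots,U_{n}$. For each $i$ I would fix an order arc $\alpha_{i}\colon[0,1]\to C(A\cup\textrm{cl}(U_{i}))$ with $\alpha_{i}(0)=A$ and $\alpha_{i}(1)=B_{i}$ a \emph{proper} subcontinuum of $A\cup\textrm{cl}(U_{i})$ (run a full order arc up to $A\cup\textrm{cl}(U_{i})$ and keep a proper initial segment). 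Here I use that $\textrm{cl}(U_{i})-A=U_{i}$ (the boundary of $U_{i}$ lies in $A$) and that $\textrm{cl}(U_{i})\cap\textrm{cl}(U_{j})\subseteq A$ for $i\neq j$ (distinct components of $Y-A$ have disjoint closures off $A$). Then I set
\[
G(t_{1},\dots,t_{n})=\alpha_{1}(t_{1})\cup\cdots\cup\alpha_{n}(t_{n}),
\]
which lies in $C(X)$ since every $\alpha_{i}(t_{i})$ contains $A$.

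The map $G$ is continuous and injective, the latter because each coordinate is recovered as $\alpha_{i}(t_{i})=G(t_{1},\dots,t_{n})\cap(A\cup\textrm{cl}(U_{i}))$, the terms $\alpha_{j}(t_{j})$ with $j\neq i$ collapsing onto $A$ under this intersection; hence $G$ embeds $[0,1]^{n}$ as an $n$-cell in $C(X)$. To place it inside $HS(p,X)$ I would choose $p\in U_{1}-B_{1}$, nonempty because $A\subseteq B_{1}\subsetneq A\cup\textrm{cl}(U_{1})$ pushes the difference into $\textrm{cl}(U_{1})-A=U_{1}$. Then $p\notin\alpha_{1}(t_{1})\subseteq B_{1}$, and $p\notin\alpha_{j}(t_{j})$ for $j\neq1$ since $p\in U_{1}$ while $\alpha_{j}(t_{j})\subseteq A\cup\textrm{cl}(U_{j})$ misses $U_{1}$. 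Therefore $G([0,1]^{n})\subseteq C(X)-C(p,X)$, and Remark \ref{complements} transports it to the $n$-cell $\pi^{X}_{p}(G([0,1]^{n}))$ in $HS(p,X)$. Note that this implication requires no restriction on $\dim X$.

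For the converse, suppose $HS(p,X)$ contains an $n$-cell $E$ for some $p$. If $C^{X}_{p}\in E$, I would shrink $E$ to a small sub-$n$-cell around an interior point of $E$ distinct from $C^{X}_{p}$; this is again an $n$-cell and avoids $C^{X}_{p}$, so in every case one gets an $n$-cell contained in $HS(p,X)-\{C^{X}_{p}\}$. By Remark \ref{complements} this yields an $n$-cell inside $C(X)-C(p,X)\subseteq C(X)$. It then remains to invoke the classical characterization, valid for one-dimensional continua, that $C(X)$ contains an $n$-cell only if $X$ contains an $n$-od (cf.\ \cite{IllanesNadler(1999)}), which produces the required $n$-od in $X$.

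The main obstacle is exactly this last step. The implication ``$n$-cell in $C(X)\Rightarrow n$-od in $X$'' is the only point where $\dim X=1$ is genuinely used: without it an $n$-cell in $C(X)$ can come from high-dimensional spreading of a single subcontinuum rather than from branching, and one-dimensionality is precisely what rules this out. I would either cite this fact or isolate it as a lemma, proved through a Whitney map together with the local structure of $C(X)$ when $\dim X=1$, and I would verify carefully that shrinking an $n$-cell so as to avoid the single point $C^{X}_{p}$ preserves its cell structure.
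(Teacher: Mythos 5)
Your proposal is correct and takes essentially the same route as the paper: the forward implication is the paper's construction (order arcs from $A$ to $A\cup A_i$, one arc truncated so that a point $p$ can be chosen in the first component outside its final continuum, then $(t_1,\dots,t_n)\mapsto$ union of the $\alpha_i(t_i)$ embeds $[0,1]^n$ into $C(X)-C(p,X)$), and the converse is the paper's reduction via Remark~\ref{complements} to an $n$-cell in $C(X)-C(p,X)$ followed by citing \cite[70.1]{IllanesNadler(1999)} ($n$-cell in $C(X)$ implies $n$-od in $X$). Your only divergence is the closing hedge: no Whitney-map lemma is needed since that final step is exactly the cited theorem, and your explicit injectivity check (recovering $\alpha_i(t_i)$ by intersecting with $A\cup\textrm{cl}(U_i)$) just fills in what the paper declares clear.
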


\begin{proof}
If $X$ contains an $n$--od, say $Y\in C(X)$, then there exists $A\in C(Y)$ such that $Y-A$ has $n$ different 
components, $A_{1},\cdots, A_{n}$. By \cite[Corollary 5.9, p. 75]{Nadler(1992)}, for each 
$i\in\{1,\cdots,n\}$, $A\cup A_{i}\in C(Y)$. Let $\alpha_{i}:[0,1]\to C(Y)$ be an ordered arc from $A$ to 
$A\cup A_{i}$. Take $p\in (A\cup A_{1})-\alpha_{1}(\frac{1}{2})$. It is clear that $\alpha:[0,1]^{n}\to C(X)$ 
given by $\alpha(t_{1},\cdots,t_{n})=\alpha_{1}(\frac{t_{1}}{2})\cup \bigcup_{i=2}^{n}\alpha_{i}(t_{i})$ is an 
embedding and $\alpha([0,1]^{n})\subset C(X)-C(p,X)$. Therefore, $\pi^{X}_{p}(\alpha([0,1]^{n}))$ is an 
$n$--cell contained in $HS(p,X)$.

If $HS(p,X)$ contains an $n$--cell, then $HS(p,X)-\{C^{X}_{p}\}$ contains an $n$--cell, therefore 
$C(X)-C(p,X)$ contains an $n$--cell. Now just apply \cite[70.1]{IllanesNadler(1999)} to get that $X$ 
contains an $n$--od.
\end{proof}

\begin{remark}\label{ncells}
Observe that in the previous result the condition of being 1--dimensional is not necessary to show that if $X$ 
contains any $n$--od then $HS(p,X)$ contains $n$--cells.
\end{remark}

\section{Unicoherence}

Remember that $X$ is called \textit{unicoherent} provided that, whenever $A$ and $B$ are closed, connected 
subsets of $X$ such that $X=A\cup B$, then $A\cap B$ is connected. 

A mapping $f:X\to Y$ between continua is said to be \textit{monotone} if for each $p\in Y$, $f^{-1}(p)\in C(X)$.

\begin{theorem}\label{unicoherent}
For each $p\in X$, $HS(p,X)$ is unicoherent.
\end{theorem}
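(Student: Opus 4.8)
The plan is to exploit the fact that the quotient map $\pi^X_p\colon C(X)\to HS(p,X)$ is \emph{monotone}, and to combine this with the classical fact that the hyperspace $C(X)$ is itself unicoherent, via the principle that a monotone image of a unicoherent continuum is unicoherent. First I would verify that $\pi^X_p$ is monotone: by definition I must check that every fiber $(\pi^X_p)^{-1}(z)$ is a subcontinuum of $C(X)$. For $z=C^X_p$ the fiber is exactly $C(p,X)$, which is connected (indeed an AR continuum, as recalled in the introduction); for every other point $z=[A]$ the fiber is the singleton $\{A\}$, since $\pi^X_p$ restricts to a homeomorphism on $C(X)-C(p,X)$ by Remark \ref{complements}. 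Hence all fibers are connected, so $\pi^X_p$ is a monotone surjection; moreover it is a closed map, being continuous from the compact space $C(X)$ onto the Hausdorff space $HS(p,X)$.

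Next I would record the (classical) fact that $C(X)$ is unicoherent for every continuum $X$ (see, e.g., \cite{IllanesNadler(1999)}). The heart of the argument is then the following. Suppose $HS(p,X)=\mathcal{A}\cup\mathcal{B}$ with $\mathcal{A},\mathcal{B}$ closed and connected; I must show $\mathcal{A}\cap\mathcal{B}$ is connected. Set $A=(\pi^X_p)^{-1}(\mathcal{A})$ and $B=(\pi^X_p)^{-1}(\mathcal{B})$, which are closed with $A\cup B=C(X)$. The key step is that $A$ and $B$ are connected: if $A$ admitted a separation $A=P\cup Q$ into disjoint nonempty relatively closed sets, then $P,Q$ would be closed in $C(X)$, and since each fiber of $\pi^X_p$ lying over $\mathcal{A}$ is connected it would be contained wholly in $P$ or in $Q$; as $\pi^X_p$ is closed and surjective this yields $\mathcal{A}=\pi^X_p(P)\cup\pi^X_p(Q)$, a separation of $\mathcal{A}$ into disjoint nonempty closed sets, contradicting the connectedness of $\mathcal{A}$. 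Thus $A$, and symmetrically $B$, is connected. Applying the unicoherence of $C(X)$ to the decomposition $C(X)=A\cup B$ gives that $A\cap B=(\pi^X_p)^{-1}(\mathcal{A}\cap\mathcal{B})$ is connected. Finally, since $\pi^X_p$ is surjective, $\pi^X_p(A\cap B)=\mathcal{A}\cap\mathcal{B}$, so $\mathcal{A}\cap\mathcal{B}$ is the continuous image of a connected set, hence connected. This completes the argument.

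There is no deep obstacle here, but the point requiring the most care is the connectivity of the preimages $A$ and $B$: this is precisely the lemma that the preimage of a closed connected set under a monotone closed surjection is connected, and one must invoke closedness of $\pi^X_p$ and connectedness of the fibers in the right order to push the would-be separation down to the base. The only external input is a correct reference for the unicoherence of $C(X)$; everything else is formal. If one prefers, the whole middle paragraph can be replaced by citing the known result that monotone images of unicoherent continua are unicoherent, applied directly to the monotone surjection $\pi^X_p$.
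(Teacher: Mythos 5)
Your proposal is correct and takes essentially the same route as the paper: the paper's proof simply notes that $C(X)$ is unicoherent \cite[19.8, p.~159]{IllanesNadler(1999)} and that $\pi^X_p$ is monotone, then cites \cite[1.21, p.~138]{Whyburn(1942)} for the fact that monotone images of unicoherent continua are unicoherent. The only difference is that you prove that preservation lemma by hand (via closedness of $\pi^X_p$ and connectedness of preimages of closed connected sets under a monotone closed surjection), a correct expansion of exactly the result the paper cites.
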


\begin{proof}
It is well known that $C(X)$ is unicoherent (see \cite[19.8, p. 159]{IllanesNadler(1999)}). Since $\pi^X_p$ is monotone, $HS(p,X)$ is unicoherent (see \cite[1.21, p. 138]{Whyburn(1942)}).
\end{proof}

Recall that $X$ has \textit{property (b)} if, for each mapping $f:X\to S^{1}$ , there exists a mapping $g:X\to \mathbb{R}$ such that $f(x)=(\textrm{exp}\circ g)(x)$
for all $x\in X$ where $\textrm{exp}$ is the exponential mapping into the complex plane.

In general, if $X$ is a connected, normal topological space and $X$ has property (b), then $X$ is unicoherent (see \cite[Theorem 2, p. 69]{Eilemberg(1936)}). Moreover, it is well known that, if $X$ is a locally connected continuum, then $X$ has property (b) if and only if $X$ is unicoherent (see \cite[Theorem 3, p. 70]{Eilemberg(1936)}).

\begin{theorem}\label{propertyb}
$HS(p,X)$ has property (b) for any $p\in X$.
\end{theorem}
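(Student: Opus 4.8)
The plan is to transfer property (b) from $C(X)$ to its monotone image $HS(p,X)$, exactly paralleling the proof of Theorem \ref{unicoherent}. I would rely on two facts: that $C(X)$ itself has property (b), and that property (b) is inherited by monotone images. For the first, I would quote the known result that, for every continuum $X$, the hyperspace $C(X)$ has property (b) (see \cite{IllanesNadler(1999)}). It is worth stressing that this cannot simply be read off from the Eilenberg equivalence recalled above together with the unicoherence of $C(X)$, since $C(X)$ is locally connected only when $X$ is; so property (b) of $C(X)$ must enter as an independent input rather than being derived from unicoherence.

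For the transfer, I would argue directly from the structure of $\pi^X_p$, which is a surjection whose only nondegenerate fiber is the connected set $C(p,X)$ (hence $\pi^X_p$ is monotone). Let $g:HS(p,X)\to S^1$ be a mapping. Composing gives $g\circ\pi^X_p:C(X)\to S^1$, and, by property (b) of $C(X)$, there is a mapping $G:C(X)\to\mathbb{R}$ with $g\circ\pi^X_p=\exp\circ G$. On each fiber of $\pi^X_p$ the map $g\circ\pi^X_p$ is constant, so $\exp\circ G$ is constant there and $G$ sends that fiber into a single fiber of $\exp$, which is a discrete subset of $\mathbb{R}$; since the fibers of $\pi^X_p$ are connected, $G$ is constant on each of them. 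As $\pi^X_p$ is a closed surjection between compacta it is a quotient map, so $G$ factors as $G=h\circ\pi^X_p$ for some mapping $h:HS(p,X)\to\mathbb{R}$. Then $\exp\circ h\circ\pi^X_p=\exp\circ G=g\circ\pi^X_p$, and the surjectivity of $\pi^X_p$ forces $g=\exp\circ h$. Thus every mapping $HS(p,X)\to S^1$ factors through $\exp$, i.e. $HS(p,X)$ has property (b).

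I expect the only genuine obstacle to be securing the first ingredient, namely property (b) of $C(X)$ for an arbitrary, possibly non--locally connected, continuum, since one really needs property (b) and not merely the unicoherence of $C(X)$. Once this is available, the descent is routine; the sole delicate points are the discreteness-plus-connectedness step that forces $G$ to be constant on the fibers of $\pi^X_p$, and the appeal to the quotient-map property (as in the factorization used for the commutative diagram above, cf. \cite{Dugundji(1966)}) to guarantee that the induced $h$ is continuous.
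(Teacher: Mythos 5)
Your proposal is correct and takes essentially the same route as the paper: property (b) of $C(X)$ is quoted as an external input (the paper uses \cite[Theorem 3]{Nadler(1970)}), and then property (b) is transferred along the monotone map $\pi^X_p$, where the paper simply cites \cite[Theorem 2, p. 434]{Kuratowski(1978)} for the fact that monotone images preserve property (b). Your only difference is that you prove this transfer step directly (lift $g\circ\pi^X_p$, force constancy of the lift on the connected fibers via discreteness of the fibers of $\exp$, and descend through the quotient map), which is precisely the content of the cited Kuratowski result, and your argument for it is sound.
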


\begin{proof}
By \cite[Theorem 3]{Nadler(1970)}, $C(X)$ has property (b). The monotone image of a continuum satisfying property (b) has also the property (b) (see \cite[Theorem 2, p. 434]{Kuratowski(1978)}). Since $\pi^{X}_{p}$ is monotone, the result follows.
\end{proof}

Next, we remember some classical concepts before we go further in our study. $X$ is called 
\textit{hereditarily unicoherent} if for each $A\in C(X)$, $A$ is unicoherent. A \textit{dendroid} is an 
arcwise connected and hereditarily unicoherent continuum. A \textit{fan} is a dendroid with exactly one 
ramification point (i.e., with only one point which is the common part of three otherwise disjoint arcs). To 
see more information about fans see \cite{Charatonik(1967)}. The unique ramification point of a fan $F$ is 
called the \textit{top} of $F$ and is denoted by $v$. By an \textit{end point} of a fan $F$ we mean a point 
$e$ of $X$ that is a nonseparating point of any arc in $F$ that contains $e$; $E(F)$ denotes the set of all 
end points of a fan $F$. A \textit{leg} of a fan $F$ is the unique arc in $F$ from $v$ to some end point of 
$F$. For each $e\in E(F)$, denote by $L_{e}$ the leg with end points $e$ and $v$. 

\begin{example}
Let $X$ be a fan. Note that, $C(X)=C(v,X)\cup \displaystyle\bigcup_{e\in E(X)}C(L_{e})$. It is easy to see that $C(v,X)\cap C(L_{e})=C(v,L_{e})$, $C(X)-C(v,X)$ is homeomorphic to $HS(v,X)-\{C^{X}_{v}\}$ and for each $e\in E(X)$, $HS(v,L_{e})$ is homeomorphic to $C(L_{e})$. Consequently $HS(v,X)$ is homeomorphic to $\displaystyle\bigcup_{e\in E(X)}C(L_{e})$.
\end{example}

\section{Locally connectedness}
We begin this section recalling that $X$ has the \textit{property of Kelley} provided that for each 
$\varepsilon>0$, there exists $\delta>0$ such that if $p$ and $q$ are two points of $X$ such that 
$d(p,q)<\delta$ and if $A$ is a subcontinuum of $X$ such that $p\in A$, then there exists a subcontinuum $B$ 
of $X$ such that
$q\in B$ and $H(A,B)<\varepsilon$.\\

The relevance of the next result is that, in the class of continua having the Kelley's property, it gives a characterization of the locally connectedness of $X$ in 
terms of the hyperspaces $H(p,X)$ introduced here.

\begin{theorem}\label{locallyconnected}
Suppose that $X$ has the property of Kelly. Then the following are equivalent:
\begin{enumerate}
\item $X$ is locally connected,
\item $C(X)$ is locally connected,
\item there exist $p,q\in X$ with $p\neq q$ such that $HS(p,X)$ and $HS(q,X)$ are locally connected.
\end{enumerate}
\end{theorem}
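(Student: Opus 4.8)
The plan is to prove the cycle $(1)\Rightarrow(2)\Rightarrow(3)\Rightarrow(1)$, so that the real work is concentrated in the last implication. The equivalence $(1)\Leftrightarrow(2)$ I would simply quote: it is classical that a continuum $X$ is locally connected if and only if $C(X)$ is locally connected (see \cite{IllanesNadler(1999)}), and this part uses nothing about the property of Kelley. For $(2)\Rightarrow(3)$ I would observe that $C(X)$ locally connected makes it a Peano continuum, and since $\pi^{X}_{p}\colon C(X)\to HS(p,X)$ is a continuous surjection, the continuous image $HS(p,X)$ is again a Peano continuum, hence locally connected; as this holds for \emph{every} $p\in X$, it holds in particular for some pair $p\neq q$, which gives $(3)$.

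The heart of the matter is $(3)\Rightarrow(1)$, and this is where I expect the difficulty to lie. Assume $HS(p,X)$ and $HS(q,X)$ are locally connected with $p\neq q$. By Remark \ref{complements}, $C(X)-C(p,X)$ is homeomorphic to the open set $HS(p,X)-\{C^{X}_{p}\}$ of a locally connected space, hence is itself locally connected; likewise $C(X)-C(q,X)$ is locally connected. Since $C(p,X)$ and $C(q,X)$ are closed in $C(X)$, these are open subsets, and local connectedness is preserved under unions of open locally connected subspaces, so $W:=(C(X)-C(p,X))\cup(C(X)-C(q,X))=C(X)-(C(p,X)\cap C(q,X))$ is locally connected. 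The decisive point is that the hypothesis $p\neq q$ forces $W$ to contain every singleton: a one-point set $\{x\}$ could lie in $C(p,X)\cap C(q,X)$ only if $p,q\in\{x\}$, which is impossible. Thus $F_{1}(X)\subset W$, and consequently $C(X)$ is locally connected at each point $\{x\}$ of $F_{1}(X)$.

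It remains to descend from local connectedness of $C(X)$ along the singletons to local connectedness of $X$ itself, and this transfer across the embedding $x\mapsto\{x\}$ is the main obstacle, being the one place where subtlety can enter and where the property of Kelley is the natural instrument. Concretely, given $x\in X$ and an open $U\ni x$, I would use local connectedness of $C(X)$ at $\{x\}$ to choose a connected neighborhood $\mathcal{V}$ of $\{x\}$ contained in $\langle U\rangle=C(U)$, and set $V=\bigcup\mathcal{V}\subset U$. Since the union of a connected subfamily of $C(X)$ is a connected subset of $X$ (see \cite{Nadler(1978)}), $V$ is connected; and since every singleton $\{y\}$ with $d(x,y)$ small is Hausdorff-close to $\{x\}$ and hence lies in $\mathcal{V}$, the set $V$ is a neighborhood of $x$. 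This produces a connected neighborhood of $x$ inside $U$, so $X$ is connected im kleinen, hence locally connected, at every point, completing $(3)\Rightarrow(1)$. I would lean on the property of Kelley exactly here, as the standing hypothesis that governs how subcontinua deform as their points vary and thereby keeps the passage from the hyperspace to $X$ under control; it is also worth highlighting that invoking \emph{two} distinct points is precisely what removes the only genuinely problematic singleton (the one that would otherwise be collapsed), which is why the statement is phrased with $p\neq q$.
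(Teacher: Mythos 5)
Your proof is correct, and at the decisive step it takes a genuinely different---and in fact stronger---route than the paper. The skeleton coincides: $(1)\Leftrightarrow(2)$ is the classical unconditional result (the paper cites \cite[(1.92), p.~134]{Nadler(1978)}); $(2)\Rightarrow(3)$ is the same Peano-image argument via $\pi^{X}_{p}$; and $(3)\Rightarrow(1)$ begins, as in the paper, from Remark \ref{complements} to produce an open connected $\mathcal{V}$ with $\{x\}\in\mathcal{V}\subset C(U)$, the hypothesis $p\neq q$ serving in both arguments to cover every point of $X$. The divergence is in the transfer back to $X$: the paper invokes \cite[Lemma 2.2 and Theorem 3.5]{CharatonikIllanes(2006)} to conclude that $\bigcup\mathcal{V}$ is \emph{open} and connected in $X$, and it is precisely this openness statement that consumes the property of Kelley (given $a\in A\in\mathcal{V}$ and $y$ near $a$, Kelley yields $B\ni y$ Hausdorff-close to $A$, hence $B\in\mathcal{V}$ and $y\in\bigcup\mathcal{V}$). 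You instead settle for less and win more: since $\mathcal{V}$ is a neighborhood of $\{x\}$ it contains every singleton $\{y\}$ with $d(x,y)$ small, so $\bigcup\mathcal{V}$ contains a metric ball around $x$ and is a (not necessarily open) connected neighborhood of $x$ inside $U$; connectedness of $\bigcup\mathcal{V}$ is elementary (if $\bigcup\mathcal{V}=P\cup Q$ were a separation, then $\{A\in\mathcal{V}:A\subset P\}$ and $\{A\in\mathcal{V}:A\subset Q\}$ would separate $\mathcal{V}$, since a Hausdorff limit of continua lying in $P$ lies in $\mathrm{cl}(P)$ and so cannot lie in $Q$). This gives connectedness im kleinen at every point of $X$, which is equivalent to local connectedness. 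The notable consequence---which you half-obscure with your closing remark that Kelley is ``the natural instrument'' at this step---is that your argument never uses the property of Kelley anywhere: you should delete that gesture and state the stronger theorem, since your proof shows the Kelley hypothesis in Theorem \ref{locallyconnected} is superfluous, thereby answering, for this theorem, the question raised in the paper's final section. What the paper's route buys is brevity and honest open connected neighborhoods at once, given the cited machinery; what yours buys is self-containedness and a strictly weaker hypothesis. Two small repairs: write $\langle U\rangle\cap C(X)=C(U)$ rather than $\langle U\rangle=C(U)$ (the former lives in $2^{X}$), and if the union lemma is not located verbatim in \cite{Nadler(1978)}, include the three-line proof sketched above.
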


\begin{proof}
By \cite[(1.92), p. 134]{Nadler(1978)}, conditions $1.$ and $2.$ are equivalent.\\
If $C(X)$ is locally connected, then $HS(p,X)$ is locally connected for each $p\in X$ because $HS(p,X)=\pi^X_p(C(X))$, so $2. \Rightarrow 3.$\\
In order to prove $3. \Rightarrow 1.$, suppose that there exist different points $p$ and $q$ in $X$ such that $HS(p,X)$ and $HS(q,X)$ are locally connected. Let $x\in X-\{p\}$ and let $U$ be an open subset in $X$ containing $x$. Take an open set $V$, such that $x\in V\subset (U\cap (X-\{p\}))$. Note that $\{x\}\in (\langle V \rangle\cap C(X))\subset C(X)-C(p,X)$. Since $C(X)-C(p,X)$ is homeomorphic to the open locally connected set $HS(p,X)-\{C^{X}_{p}\}$, there exists an open connected subset $\mathcal{V}$ of $C(X)-C(p,X)$ such that $\{x\}\in\mathcal{V}\subset(\langle V \rangle\cap C(X))$. From \cite[Lemma 2.2 and Theorem 3.5]{CharatonikIllanes(2006)}, $\bigcup \mathcal{V}$ is a connected open subset of $X$. Finally, observe that $x\in \bigcup\mathcal{V}\subset V\subset U$. This conclude that, $X$ is locally connected in each point of $X-\{p\}$. In a similar way, by switching the roles of $p$ and $q$, we have that $X$ is locally connected in $p$. Therefore, $X$ is locally connected.
\end{proof}

A \textit{free arc} in $X$ is an arc $\beta$ in $X$ such that $\beta-\{\textrm{end points}\}$ is open in $X$. A \textit{Hilbert cube} is any space homeomorphic to $\mathcal{Q}=\prod^{\infty}_{n=1}[0,1]_{n}$ with the product topology, where $[0,1]_{n} = [0,1]$ for each positive integer $n$.

\begin{theorem}
If $X$ is locally connected without free arcs, then for each $p\in X$, $HS(p,X)$ is homeomorphic to $\mathcal{Q}$. 
\end{theorem}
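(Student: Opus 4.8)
The plan is to exhibit $HS(p,X)$ as the image of the Hilbert cube under a collapse that is known to preserve the Hilbert cube. First I would invoke the Curtis--Schori--West characterization of $C(Y)$ as a Hilbert cube: a nondegenerate Peano continuum $Y$ contains no free arc if and only if $C(Y)$ is homeomorphic to $\mathcal{Q}$. Since $X$ is locally connected and, by hypothesis, has no free arcs, this yields a homeomorphism $C(X)\cong\mathcal{Q}$. As $HS(p,X)=C(X)/C(p,X)$ is precisely $C(X)$ with the closed subspace $C(p,X)$ collapsed to the point $C^{X}_{p}$, the theorem reduces to the purely Hilbert-cube assertion: when a closed subset $A$ of $\mathcal{Q}$ is collapsed to a point, the result is again $\mathcal{Q}$, provided $A$ is suitably nice.

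The two properties of $A:=C(p,X)\subseteq C(X)$ that I would establish are that $A$ is an AR and that $A$ is a $Z$-set. The first is free: it is recalled in the Introduction (after Eberhart) that $C(p,X)$ is an AR continuum, hence contractible and cell-like, and this needs no hypothesis on $X$. Granting that $A$ is also a $Z$-set, the conclusion follows from the standard collapsing principle for the Hilbert cube: if $A$ is an AR that is a $Z$-set in $\mathcal{Q}$, then $\mathcal{Q}/A$ is homeomorphic to $\mathcal{Q}$. Concretely, the quotient map $\pi^{X}_{p}$, viewed as a map $\mathcal{Q}\to\mathcal{Q}/A$, has a single nondegenerate fiber, namely $A$, which is cell-like because it is an AR; thus $\pi^{X}_{p}$ is a cell-like map. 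Since $A$ is a $Z$-set one checks that $\mathcal{Q}/A$ is a compact AR (collapsing a closed AR $Z$-set preserves the AR property and contractibility) with the disjoint cells property (cells can be pushed off the single collapse point $C^{X}_{p}$ using the $Z$-set structure, while away from it the quotient agrees with the open subset $C(X)\setminus C(p,X)$ of $\mathcal{Q}$), so Toru\'nczyk's characterization identifies $\mathcal{Q}/A$ with $\mathcal{Q}$; equivalently, $\pi^{X}_{p}$ is a near-homeomorphism by Chapman's theorem on cell-like maps of $\mathcal{Q}$-manifolds.

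The hard part, and the only place where the no-free-arc hypothesis is genuinely used, will be verifying that $C(p,X)$ is a $Z$-set in $C(X)$. This amounts to producing, for every $\varepsilon>0$, a map of $C(X)$ into $C(X)\setminus C(p,X)$ that is $\varepsilon$-close to the identity, that is, a continuous way of nudging every subcontinuum off the point $p$ by a small Hausdorff amount. I would build this from a homotopy $h\colon C(X)\times[0,1]\to C(X)$ with $h_{0}=\mathrm{id}$ and $p\notin h_{s}(B)$ for all $s>0$ and all $B\in C(X)$. Local connectedness (via a convex metric on the Peano continuum $X$) lets one deform subcontinua continuously while keeping them connected, whereas the absence of free arcs at $p$ guarantees that, no matter how small a neighborhood of $p$ one is forced to work in, there is enough branching to route the nudged continuum around $p$. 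These are exactly the features that fail for an arc, where $C(p,X)$ is \emph{not} a $Z$-set and, consistently, the conclusion of the theorem is false. Once the $Z$-set property is secured, the AR property (already available) together with the collapsing principle completes the proof.
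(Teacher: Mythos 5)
Your proposal is correct in outline and coincides with the paper's proof through its first half: both invoke the Curtis--Schori theorem \cite[4.1]{Curtis(1978)} to get $C(X)\cong\mathcal{Q}$, and Eberhart \cite[Theorem 2, p. 220]{Eberhart(1978)} to get that $C(p,X)$ is an AR. The endgames, however, are genuinely different. You work with the quotient directly: $C(p,X)$ is cell-like because it is an AR, so $\pi^{X}_{p}$ is a cell-like map with one nondegenerate fiber, and, granting the $Z$-set property, you identify $\mathcal{Q}/C(p,X)$ with $\mathcal{Q}$ via Toru\'nczyk's characterization or Chapman's near-homeomorphism theorem for cell-like maps. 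The paper never analyzes the quotient at all: from the AR property it deduces that $C(p,X)$ has trivial shape \cite[5.5, p. 28]{Borsuk(1973)}, applies Chapman's complement theorem \cite[25.2]{Chapman(1976)} to conclude $C(X)-C(p,X)\cong\mathcal{Q}-\{q\}$, and then, since $HS(p,X)-\{C^{X}_{p}\}$ is homeomorphic to $C(X)-C(p,X)$ (Remark \ref{complements}), finishes by uniqueness of the one-point compactification \cite[2.23]{Chandler(1976)}. The paper's route buys economy --- no disjoint-cells verification, no cell-like approximation machinery --- while yours is more transparent about why the collapse itself is tame.

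The one substantive issue is that your self-declared ``hard part,'' namely that $C(p,X)$ is a $Z$-set in $C(X)$, is left as a plan rather than a proof, and it really is the technical crux of your route: the homotopy you describe amounts to a continuous selection, for every subcontinuum through $p$, of a nearby subcontinuum missing $p$, and this is delicate --- for a dendrite with dense branch points, $A-B_{\delta}(p)$ may have infinitely many components with no canonical continuous choice of which piece to keep, so a bare convex-metric nudge does not obviously work; the clean way to obtain the lemma is Curtis's growth-hyperspace techniques, observing that $C(p,X)$ is a proper closed growth hyperspace of $C(X)$. That said, you should know that the paper carries exactly the same unacknowledged dependency: Chapman's complement theorem is a statement about $Z$-sets in $\mathcal{Q}$, so citing \cite[25.2]{Chapman(1976)} already presupposes that $C(p,X)$ sits in $C(X)$ as a $Z$-set (the singleton $\{q\}$ is automatically one; $C(p,X)$ is not automatically one --- in your arc example $C(p,[0,1])$ has nonempty interior in the triangle $C([0,1])$, hence is not a $Z$-set, consistent with the failure of the conclusion there). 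So you have correctly isolated the hidden hypothesis that the paper's proof uses silently; modulo supplying or citing that $Z$-set lemma, both your argument and the paper's are complete.
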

\begin{proof}
Since $X$ is locally connected without free arcs, $C(X)$ is homeomorphic to $\mathcal{Q}$ (see \cite[4.1]{Curtis(1978)}). On the other hand, from \cite[Theorem 2, p. 220]{Eberhart(1978)}, $C(p,X)$ is AR (absolute retract) and therefore contractible. Hence, $C(p,X)$ has the shape of a point in the sense of Borsuk (see \cite[5.5, p. 28]{Borsuk(1973)}). Thus, $C(X)-C(p,X)$ is homeomorphic to $\mathcal{Q}-\{q\}$ for some point $q\in \mathcal{Q}$ (see \cite[25.2]{Chapman(1976)}). Since $C(X)-C(p,X)$ is homeomorphic to $HS(p,X)-\{C^{X}_{p}\}$, we have that $\mathcal{Q}-\{q\}$ is homeomorphic to $HS(p,X)-\{C^{X}_{p}\}$. Therefore, $HS(p,X)$ is homeomorphic to $\mathcal{Q}$ (see \cite[2.23]{Chandler(1976)}).
\end{proof}

The following is an immediate consequence of the previous theorem and \cite[4.1]{Curtis(1978)}.

\begin{corollary}
If $X$ is homeomorphic to $\mathcal{Q}$, then for each $p\in X$, $HS(p,X)$ is homeomorphic to $C(X)$.
\end{corollary}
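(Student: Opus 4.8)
The plan is to reduce the statement to the immediately preceding theorem together with \cite[4.1]{Curtis(1978)}. Both of these results take as hypothesis that the underlying continuum is locally connected and has no free arcs, so the only real work is to check that the Hilbert cube $\mathcal{Q}$ meets these two conditions; the conclusion then follows by composing homeomorphisms.

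First I would verify local connectedness. Since $\mathcal{Q}=\prod_{n=1}^{\infty}[0,1]_{n}$ is a countable product of locally connected spaces (equivalently, a convex subset of a metric linear space), it is locally connected; as $X$ is homeomorphic to $\mathcal{Q}$, so is $X$. Next I would argue that $X\cong\mathcal{Q}$ has no free arcs. Suppose, toward a contradiction, that $\beta$ were a free arc in $\mathcal{Q}$. Then $\beta$ minus its two end points would be a nonempty open subset of $\mathcal{Q}$ homeomorphic to an open subinterval of $[0,1]$, hence a $1$--dimensional open set. But every point of $\mathcal{Q}$ has a neighborhood basis consisting of sets homeomorphic to $\mathcal{Q}$, so every nonempty open subset of $\mathcal{Q}$ is infinite-dimensional, a contradiction. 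Thus $X$ is locally connected and has no free arcs.

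Having confirmed the two hypotheses, the rest is immediate. By the previous theorem, $HS(p,X)$ is homeomorphic to $\mathcal{Q}$ for each $p\in X$; and by \cite[4.1]{Curtis(1978)}, $C(X)$ is also homeomorphic to $\mathcal{Q}$. Composing these two homeomorphisms gives that $HS(p,X)$ is homeomorphic to $C(X)$ for every $p\in X$, as claimed. I do not anticipate any serious obstacle here: the single point that warrants a moment's care is the absence of free arcs in $\mathcal{Q}$, which rests entirely on the infinite-dimensionality of every nonempty open subset of the Hilbert cube.
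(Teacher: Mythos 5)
Your proposal is correct and follows exactly the route the paper intends: the paper states the corollary as an immediate consequence of the preceding theorem and \cite[4.1]{Curtis(1978)}, i.e., both $HS(p,X)$ and $C(X)$ are homeomorphic to $\mathcal{Q}$ and hence to each other. Your explicit verification that $\mathcal{Q}$ is locally connected and contains no free arcs (via the infinite-dimensionality of every nonempty open subset of $\mathcal{Q}$) is precisely the routine checking the paper leaves implicit, and it is carried out correctly.
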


\section{Aposyndesis}

Let $x,y\in X$ with $x\neq y$, $X$ is \textit{aposyndetic at $x$ with respect to $y$} provided there exists a continuum $M$ such that $x\in\textrm{int}(M)$ and $y\in X-M$. If for each $y\in X-\{x\}$, $X$ is aposyndetic at $x$ with respect to $y$, then $X$ is \textit{aposyndetic at $x$}. We will say that $X$ is \textit{aposyndetic} if it is aposyndetic in each one of its points. A continuum $X$ is \textit{semi--aposyndetic} if for each pair of distinct elements of $X$, $x$ and $y$, $X$ is aposyndetic at $x$ with respect to $y$ or $X$ is aposyndetic at $y$ with respect to $x$. It is well known that $C(X)$ is an aposyndetic continuum (see \cite[Corollary 5.2]{Macias(2001)}).

\begin{theorem}\label{semi-aposyndetic}
If $p\in X$ is such that $HS(p,X)$ is locally connected in $C^{X}_{p}$, then $HS(p,X)$ is aposyndetic in $C^{X}_{p}$ and for each $[A],[B]\in HS(p,X)-\{C^{X}_{p}\}$ with $[A]\neq [B]$, $HS(p,X)$ is aposyndetic at $[A]$ with respect to $[B]$.  
\end{theorem}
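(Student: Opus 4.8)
The plan is to treat the two assertions separately. The aposyndesis at $C^{X}_{p}$ will come directly from the hypothesis of local connectedness there, while the aposyndesis at points of $HS(p,X)-\{C^{X}_{p}\}$ will be transferred from the known aposyndesis of $C(X)$ (see \cite[Corollary 5.2]{Macias(2001)}) through the homeomorphism of Remark \ref{complements}. Throughout I will use that $\pi^{X}_{p}$ is injective on $C(X)-C(p,X)$ and that $C(X)-C(p,X)$ is open in $C(X)$ (because $C(p,X)$ is closed).

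First I would prove aposyndesis at $C^{X}_{p}$. Fix any $[B]\in HS(p,X)$ with $[B]\neq C^{X}_{p}$. Since $HS(p,X)$ is a metric continuum, I would choose an open set $U$ with $C^{X}_{p}\in U$ and $[B]\notin \textrm{cl}(U)$, for instance a sufficiently small ball about $C^{X}_{p}$. By hypothesis $HS(p,X)$ is locally connected at $C^{X}_{p}$, so there is a connected open set $V$ with $C^{X}_{p}\in V\subset U$. Taking $M=\textrm{cl}(V)$, the set $M$ is a subcontinuum of $HS(p,X)$ (it is connected as the closure of a connected set, compact as a closed subset, and nondegenerate because it contains the nonempty open set $V$); moreover $C^{X}_{p}\in V\subset\textrm{int}(M)$ and $[B]\notin M\subset\textrm{cl}(U)$. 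As $[B]$ was arbitrary, $HS(p,X)$ is aposyndetic at $C^{X}_{p}$.

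Next I would treat two distinct points $[A],[B]\in HS(p,X)-\{C^{X}_{p}\}$, letting $A,B\in C(X)-C(p,X)$ be the corresponding elements, which are distinct since $\pi^{X}_{p}$ is injective off $C(p,X)$. Because $C(X)$ is aposyndetic, there is a subcontinuum $\mathcal{M}$ of $C(X)$ with $A\in\textrm{int}_{C(X)}(\mathcal{M})$ and $B\notin\mathcal{M}$. I would set $M=\pi^{X}_{p}(\mathcal{M})$, which is a subcontinuum of $HS(p,X)$ as the continuous image of a continuum. The condition $[B]\notin M$ holds because the only preimage of $[B]$ under $\pi^{X}_{p}$ is $B$ itself (injectivity off $C(p,X)$), and $B\notin\mathcal{M}$. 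For the interior condition, I would pick an open set $\mathcal{W}$ in $C(X)$ with $A\in\mathcal{W}\subset\mathcal{M}$ and replace it by $\mathcal{W}\cap(C(X)-C(p,X))$, which is still open, contains $A$, and lies in $\mathcal{M}$; by Remark \ref{complements} its image under $\pi^{X}_{p}$ is open in $HS(p,X)-\{C^{X}_{p}\}$, hence open in $HS(p,X)$, contains $[A]$, and is contained in $M$, so $[A]\in\textrm{int}(M)$.

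The step that requires the most care is this last transfer of the interior condition: since the quotient collapses all of $C(p,X)$ to the single point $C^{X}_{p}$, a witnessing neighborhood of $A$ need not map to a neighborhood of $[A]$ unless one first shrinks it away from $C(p,X)$. The remedy is exactly to intersect with the open set $C(X)-C(p,X)$, which Remark \ref{complements} carries homeomorphically onto the open set $HS(p,X)-\{C^{X}_{p}\}$; once this is done, openness of the image follows at once. Everything else reduces to the injectivity of $\pi^{X}_{p}$ off $C(p,X)$ and elementary continuum properties, so I expect no further obstacles.
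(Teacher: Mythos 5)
Your proposal is correct; both halves check out. The aposyndesis at $C^{X}_{p}$ via the closure of a small connected open neighborhood is sound (and is exactly what the paper leaves implicit in its final sentence), and the transfer argument for distinct $[A],[B]\in HS(p,X)-\{C^{X}_{p}\}$ is valid: $\mathcal{W}\cap(C(X)-C(p,X))$ is a saturated open set, so by Remark \ref{complements} its image is open in $HS(p,X)-\{C^{X}_{p}\}$, hence in $HS(p,X)$, which gives $[A]\in\mathrm{int}\bigl(\pi^{X}_{p}(\mathcal{M})\bigr)$, while injectivity of $\pi^{X}_{p}$ off $C(p,X)$ gives $[B]\notin\pi^{X}_{p}(\mathcal{M})$.

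Where you diverge from the paper is the key step, and the difference is worth recording. The paper also pushes an aposyndesis witness $\mathcal{K}\in C(C(X))$ forward through $\pi^{X}_{p}$, but it splits into cases according to whether $\mathcal{K}$ meets $C(p,X)$; in the bad case it \emph{enlarges} $\mathcal{K}$ rather than shrinking the interior witness: it invokes the local connectedness hypothesis to find a connected open $\mathfrak{W}\ni C^{X}_{p}$ with $[A],[B]\notin\mathrm{cl}(\mathfrak{W})$, pulls it back to a saturated connected open $\mathcal{W}\supset C(p,X)$, and uses $\mathrm{cl}(\mathcal{W})\cup\mathcal{K}$ as the new continuum, whose image has $[A]$ in its interior because the enlarged set swallows all of $C(p,X)$. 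Your shrinking trick makes the case analysis unnecessary and, more interestingly, never uses the local connectedness hypothesis in the second assertion at all: in your proof that hypothesis is consumed only by the aposyndesis at $C^{X}_{p}$ itself. So your argument actually establishes something slightly stronger than the stated theorem, namely that for \emph{every} continuum $X$ and every $p\in X$, $HS(p,X)$ is aposyndetic at $[A]$ with respect to $[B]$ whenever $[A]\neq[B]$ and neither equals $C^{X}_{p}$; this is directly relevant to the paper's open question on aposyndesis of $HS(p,X)$, since it reduces that question to the pairs involving $C^{X}_{p}$. One minor remark: like the paper, you read ``locally connected at $C^{X}_{p}$'' in the strong sense that small \emph{open} connected neighborhoods exist; connectedness im kleinen at $C^{X}_{p}$ would already suffice for your first half, since the closure of any connected neighborhood serves as the continuum $M$.
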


\begin{proof}
Let $[A],[B]\in HS(p,X)-\{C^{X}_{p}\}$. In this case $A,B\in C(X)-C(p,X)$ and $A\neq B$. Since $C(X)$ is aposyndetic, there exists $\mathcal{K}\in C(C(X))$ such that $A\in \textrm{int}(\mathcal{K})\subset \mathcal{K}\subset C(X)-\{B\}$. \\
If $\mathcal{K}\cap C(p,X)=\emptyset$, then it is clear that $$[A]\in \textrm{int}(\pi^{X}_{p}(\mathcal{K}))\subset \pi^{X}_{p}(\mathcal{K})\subset HS(p,X)-\{[B]\},$$ and $\pi^{X}_{p}(\mathcal{K})$ is a continuum. \\
Now, suppose that $\mathcal{K}\cap C(p,X)\neq \emptyset$. Let $\mathfrak{W}$ be an open connected subset of $HS(p,X)-\{[A],[B]\}$ containing $C^{X}_{p}$ such that $[A],[B]\notin \textrm{cl}(\mathfrak{W})$. Thus, $\mathcal{W}=(\pi^{X}_{p})^{-1}(\mathfrak{W})$ is an open connected subspace of $C(X)$ and $C(p,X)\subset \mathcal{W}$. Therefore, we have that $\textrm{cl}(\mathcal{W})\cup \mathcal{K}\in C(C(X))$, $C(p,X)\subset \textrm{int}\left( \textrm{cl}(\mathcal{W})\cup \mathcal{K}\right)$ and $A\in \textrm{int}(\textrm{cl}(\mathcal{W})\cup \mathcal{K})\subset \textrm{cl}(\mathcal{W})\cup \mathcal{K}\subset C(X)-\{B\}$. The continuum $\pi^{X}_{p}(\textrm{cl}(\mathcal{W})\cup \mathcal{K})$ satisfies that $$[A]\in \textrm{int}(\pi^{X}_{p}\left( \textrm{cl}(\mathcal{W})\cup \mathcal{K})\right)\subset \pi^{X}_{p}\left( \textrm{cl}(\mathcal{W})\cup \mathcal{K}\right) \subset HS(p,X)-\{[B]\}.$$

Finally, since $HS(p,X)$ is locally connected in $C^{X}_{p}$, then $HS(p,X)$ aposyndetic in $C^{X}_{p}$.
\end{proof}

As an immediate consequence of the previous theorem we get the following.

\begin{corollary}
If $p\in X$ is such that $HS(p,X)$ is locally connected in $C^{X}_{p}$, then $HS(p,X)$ is semi--aposyndetic.
\end{corollary}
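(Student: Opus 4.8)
The plan is to derive this corollary directly from Theorem~\ref{semi-aposyndetic}, which already does essentially all the work. Recall that $X$ is semi--aposyndetic if for every pair of distinct points, the space is aposyndetic at (at least) one of them with respect to the other. So I need to verify this dichotomy for an arbitrary pair of distinct points of $HS(p,X)$, splitting into the two cases according to whether or not the special point $C^{X}_{p}$ is one of the two points.

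First I would treat the case where both points lie in $HS(p,X)-\{C^{X}_{p}\}$. Here I invoke the second conclusion of Theorem~\ref{semi-aposyndetic} verbatim: for any $[A],[B]\in HS(p,X)-\{C^{X}_{p}\}$ with $[A]\neq[B]$, the space $HS(p,X)$ is aposyndetic at $[A]$ with respect to $[B]$. This already gives aposyndesis at one of the two points, so the semi--aposyndetic condition holds for this pair. (In fact it holds symmetrically, but one direction suffices.)

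Next I would treat the case where one of the two points is $C^{X}_{p}$; say the pair is $C^{X}_{p}$ and some $[B]\in HS(p,X)-\{C^{X}_{p}\}$. By hypothesis $HS(p,X)$ is locally connected at $C^{X}_{p}$, and Theorem~\ref{semi-aposyndetic} tells us precisely that under this hypothesis $HS(p,X)$ is aposyndetic at $C^{X}_{p}$. Being aposyndetic at $C^{X}_{p}$ means it is aposyndetic at $C^{X}_{p}$ with respect to every other point, in particular with respect to $[B]$. Thus again the space is aposyndetic at one of the two points with respect to the other, and the dichotomy is satisfied.

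Since these two cases exhaust all unordered pairs of distinct points of $HS(p,X)$, and in each case $HS(p,X)$ is aposyndetic at one member of the pair with respect to the other, the definition of semi--aposyndesis is met, so $HS(p,X)$ is semi--aposyndetic. I do not anticipate a genuine obstacle here: the entire content is packaged in the preceding theorem, and the only thing to be careful about is the bookkeeping of the cases so that the special point $C^{X}_{p}$ is handled by the aposyndesis-at-$C^{X}_{p}$ clause while all ordinary pairs are handled by the relative-aposyndesis clause. The one point worth stating explicitly is that ``aposyndetic at $C^{X}_{p}$'' is by definition aposyndesis at $C^{X}_{p}$ with respect to \emph{each} other point, which is exactly what the semi--aposyndetic condition requires for the pairs involving $C^{X}_{p}$.
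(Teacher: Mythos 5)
Your proposal is correct and matches the paper exactly: the paper presents this corollary as an immediate consequence of Theorem~\ref{semi-aposyndetic}, and your case split---pairs of distinct points in $HS(p,X)-\{C^{X}_{p}\}$ handled by the relative-aposyndesis clause, pairs containing $C^{X}_{p}$ handled by the aposyndesis-at-$C^{X}_{p}$ clause---is precisely the bookkeeping that makes that deduction immediate. Nothing is missing.
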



Let $p\in X$. A continuum $A\in C(p,X)$ is called {\it terminal at} $p$ if for each $B\in C(p,X)$ we have that either $A\subset B$ or $B\subset A$. We say that $A$ is {\it terminal} provided it is terminal at $a$ for every $a\in A$.

It is well known that $A\in C(p,X)$ is terminal at $p$ if and only if $A$ is a cut point of $C(p,X)$ (see \cite[Lemma 3.7, p. 262]{Pellicer(2003)}).  We can get also, as a particular case of \cite[Lemma 3.9, p. 263]{Pellicer(2003)}, that $C(p,X)$ is an arc if and only if for each $A,B\in C(p,X)$ holds $A\subset B$ or $B\subset A$. According to this we will say that $p$ is a \textit{terminal point of $X$} when $C(p,X)$ is an arc.

\begin{theorem}\label{aposyndeticterminal}
If $p\in X$ is such that $HS(p,X)$ is locally connected in $C^{X}_{p}$, then $HS(p,X)$ is aposyndetic at $[A]\in HS(p,X)-\{C^{X}_{p}\}$ with respect to $C^{X}_{p}$ for each terminal point $A$ of $C(X)$.
\end{theorem}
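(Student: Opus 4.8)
The plan is to pull the whole problem back to $C(X)$. By Remark~\ref{complements} the restriction of $\pi^X_p$ is a homeomorphism from the open set $C(X)-C(p,X)$ onto the open set $HS(p,X)-\{C^X_p\}$; hence it suffices to produce a subcontinuum $\mathcal{M}$ of $C(X)$ with $A\in\operatorname{int}_{C(X)}(\mathcal{M})$ and $\mathcal{M}\cap C(p,X)=\emptyset$. Indeed, $\pi^X_p(\mathcal{M})$ is then a subcontinuum of $HS(p,X)$ that omits $C^X_p$ and contains $[A]$ in its interior, which is exactly aposyndesis of $HS(p,X)$ at $[A]$ with respect to $C^X_p$.

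The disjointness is the easy half and does not use terminality: since $[A]\neq C^X_p$ we have $p\notin A$, so $\delta:=d(p,A)>0$, and every $B\in C(p,X)$ satisfies $H(A,B)\geq\delta$ because $p\in B$ while $p\notin N(\eta,A)$ for $\eta<\delta$. Thus the open $\delta$--ball about $A$ in $C(X)$ already misses $C(p,X)$, and anything we keep inside it is automatically off the fiber. To turn this into a connected witness I would invoke the hypothesis at $C^X_p$: local connectedness of $HS(p,X)$ at $C^X_p$ yields a connected open set $\mathfrak{W}\ni C^X_p$ with $[A]\notin\operatorname{cl}(\mathfrak{W})$, whose preimage $\mathcal{W}=(\pi^X_p)^{-1}(\mathfrak{W})$ is a connected open neighborhood of $C(p,X)$ in $C(X)$ with $A\notin\operatorname{cl}(\mathcal{W})$. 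I would then let $\mathcal{M}$ be the connected component of $A$ in the compact set $C(X)-\mathcal{W}$; this is a subcontinuum of $C(X)$, and since $C(p,X)\subseteq\mathcal{W}$ it is disjoint from $C(p,X)$, as required.

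The main obstacle is the remaining interiority claim $A\in\operatorname{int}_{C(X)}(\mathcal{M})$, i.e.\ that the component of $A$ in $C(X)-\mathcal{W}$ is a neighborhood of $A$; equivalently, that $C(X)$ is cohesive enough at $A$ for all elements near $A$ to be linked to $A$ inside $C(X)-\mathcal{W}$. This is where terminality must enter: because $A$ is terminal, every $B\in C(X)$ meeting $A$ satisfies $B\subseteq A$ or $A\subseteq B$, so the elements of $C(X)$ close to $A$ split into those nested with $A$ (tied to $A$ by order arcs lying in the $\delta$--ball, hence in $C(X)-\mathcal{W}$) and those disjoint from $A$, and the delicate point is to connect the latter to $A$ without entering $\mathcal{W}$. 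I would isolate this as a lemma asserting that a terminal $A$ with $A\notin C(p,X)$ admits, under the standing local--connectedness hypothesis at $C^X_p$, a subcontinuum neighborhood in $C(X)$ avoiding $\mathcal{W}$, and prove it by pushing $\{A\}$ downward inside $C(A)$ and upward by order arcs that stop before $p$ is absorbed, controlling the disjoint approximants through the nested structure forced by terminality. Granting this lemma, $A\in\operatorname{int}_{C(X)}(\mathcal{M})$ and the projection $\pi^X_p(\mathcal{M})$ completes the proof; I expect verifying the lemma --- the interaction of terminality near $A$ with the connected neighborhood $\mathcal{W}$ near $C(p,X)$ --- to be the technical heart of the argument.
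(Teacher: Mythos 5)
Your reduction in the first paragraph is fine (and it is exactly how the paper's proof finishes: push a subcontinuum of $C(X)$ that contains $A$ in its interior and misses $C(p,X)$ through $\pi^{X}_{p}$), but the argument has two genuine defects. First, you have misread the hypothesis. In this paper ``$A$ is a terminal point of $C(X)$'' has the meaning fixed in the sentence immediately preceding the theorem: a point $q$ of a continuum $Z$ is a terminal point of $Z$ when $C(q,Z)$ is an arc, which by the quoted case of \cite[Lemma 3.9, p. 263]{Pellicer(2003)} is equivalent to saying that any two subcontinua of $Z$ containing $q$ are comparable under inclusion. Applied with $Z=C(X)$ and $q=A$, the hypothesis says: whenever $\mathcal{M},\mathcal{N}\in C(C(X))$ both contain the point $A$, either $\mathcal{M}\subset\mathcal{N}$ or $\mathcal{N}\subset\mathcal{M}$. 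Your reading --- ``every $B\in C(X)$ meeting $A$ satisfies $B\subseteq A$ or $A\subseteq B$'' --- is the different notion of $A$ being a terminal \emph{subcontinuum of} $X$, so from your third paragraph onward you are arguing toward a different statement, and the nested structure you invoke lives in the wrong hyperspace.

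Second, even on its own terms the proposal is incomplete precisely where you admit the ``technical heart'' lies: you never prove that the component $\mathcal{M}$ of $A$ in the compactum $C(X)-\mathcal{W}$ is a neighborhood of $A$. Components of compact sets need not be neighborhoods of their points ($C(X)$ is not assumed locally connected here), boundary bumping only says $\mathcal{M}$ reaches the boundary of $\mathcal{W}$, and the deferred lemma about order arcs ``that stop before $p$ is absorbed'' is exactly the unresolved difficulty, not a proof of it. The paper's argument needs none of this machinery: since $C(X)$ is aposyndetic \cite[Corollary 5.2]{Macias(2001)}, for each $B\in C(p,X)$ there is $M_{B}\in C(C(X))$ with $A\in\textrm{int}(M_{B})$ and $B\notin M_{B}$; by compactness of $C(p,X)$ finitely many complements $C(X)-M_{B_{1}},\dots,C(X)-M_{B_{m}}$ cover $C(p,X)$; and the (correctly read) terminality of $A$ forces $M_{B_{1}},\dots,M_{B_{m}}$ to form a chain under inclusion, so $\bigcap_{i=1}^{m}M_{B_{i}}$ is simply the smallest of them --- a subcontinuum of $C(X)$ containing $A$ in its interior and disjoint from $C(p,X)$ --- which $\pi^{X}_{p}$ carries to the required witness, by your own reduction. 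Note in passing that this argument never uses local connectedness of $HS(p,X)$ at $C^{X}_{p}$, whereas your route leans on that hypothesis and still stalls; the missing idea is the compactness-plus-chain mechanism, and without it (or a proof of your deferred lemma) the theorem is not established.
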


\begin{proof}
Since $[A]\neq C^{X}_{p}$ and $C(X)$ is aposyndetic, for each $B\in C(p,X)$ there exist a subcontinuum $M_{B}$ of $C(X)$ such that $A\in \textrm{int}(M_{B})$ and $B\subset C(X)-M_{B}$. From the compactness of $C(p,X)$ there exists a finite subset $\{B_{1},\cdots,B_{m}\}$ of $C(p,X)$ such that $C(p,X)\subset \displaystyle\bigcup^{m}_{i=1}(C(X)-B_{i})$. Since $A$ is a terminal point of $C(X)$ we have that $A\in \textrm{int} \left(\displaystyle\bigcap^{m}_{i=1}B_{i}\right)$ and $\displaystyle\bigcap^{m}_{i=1}B_{i}\in C(C(X))$. Therefore, $[A]\in \textrm{int}\left(\pi^{X}_{p}\left( \displaystyle\bigcap^{m}_{i=1}B_{i}\right)\right)$ and $\pi^{X}_{p}\left( \displaystyle\bigcap^{m}_{i=1}B_{i}\right)\subset HS(p,X)-\{C^{X}_{p}\}$.  
\end{proof}

Theorems \ref{semi-aposyndetic} and \ref{aposyndeticterminal} lead to the following.

\begin{corollary}\label{aposyndesis}
If $p\in X$ is such that $HS(p,X)$ is locally connected in $C^{X}_{p}$, then $HS(p,X)$ is aposyndetic at $C^{X}_{p}$ and in each point $[A]\in HS(p,X)$ such that $A$ is a terminal point of $C(X)$.
\end{corollary}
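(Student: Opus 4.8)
The plan is to assemble the two preceding theorems, since the corollary does nothing more than reorganize the cases those theorems already cover. Recall that, by definition, $HS(p,X)$ is aposyndetic at a point $x$ precisely when for every $y\in HS(p,X)-\{x\}$ the space is aposyndetic at $x$ with respect to $y$. So the corollary has two assertions to verify: aposyndesis at $C^{X}_{p}$, and aposyndesis at each $[A]$ with $A$ a terminal point of $C(X)$. The single standing hypothesis, that $HS(p,X)$ is locally connected at $C^{X}_{p}$, is shared by Theorems \ref{semi-aposyndetic} and \ref{aposyndeticterminal}, so I would invoke it once at the outset.

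First I would dispose of aposyndesis at $C^{X}_{p}$. This is exactly the first assertion of Theorem \ref{semi-aposyndetic} (``$HS(p,X)$ is aposyndetic in $C^{X}_{p}$''), which already furnishes, for each $y\neq C^{X}_{p}$, a continuum $M$ with $C^{X}_{p}\in\textrm{int}(M)$ and $y\notin M$. No further work is needed for this part.

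Next I would fix a terminal point $A$ of $C(X)$ and argue aposyndesis at $[A]$. The degenerate possibility $[A]=C^{X}_{p}$ is already settled by the previous step, so I would assume $[A]\in HS(p,X)-\{C^{X}_{p}\}$ and take an arbitrary $y\in HS(p,X)-\{[A]\}$, splitting into two cases. If $y=C^{X}_{p}$, then Theorem \ref{aposyndeticterminal} directly produces a continuum witnessing aposyndesis at $[A]$ with respect to $C^{X}_{p}$, this being the one place where the terminality of $A$ is genuinely used. If instead $y=[B]$ with $[B]\neq C^{X}_{p}$ and $[B]\neq [A]$, then the second assertion of Theorem \ref{semi-aposyndetic} supplies the witness. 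Since these two cases exhaust all $y\neq [A]$, the space is aposyndetic at $[A]$.

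I do not expect any real obstacle: the substantive content lives entirely in Theorems \ref{semi-aposyndetic} and \ref{aposyndeticterminal}, and the corollary is a short two-case verification. The only points demanding care are bookkeeping ones, namely recording that the local-connectedness hypothesis at $C^{X}_{p}$ is common to both invoked theorems, and observing that the case $[A]=C^{X}_{p}$ is absorbed by the first step so that the terminal-point assertion reduces cleanly to the pair of cases above.
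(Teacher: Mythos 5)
Your proposal is correct and matches the paper exactly: the paper offers no written proof beyond the remark that Theorems \ref{semi-aposyndetic} and \ref{aposyndeticterminal} together yield the corollary, and your two-case assembly (using Theorem \ref{semi-aposyndetic} for aposyndesis at $C^{X}_{p}$ and with respect to points $[B]\neq C^{X}_{p}$, and Theorem \ref{aposyndeticterminal} for the remaining case $y=C^{X}_{p}$) is precisely the intended argument. Your extra care in noting that the case $[A]=C^{X}_{p}$ is absorbed by the first assertion is a worthwhile bit of bookkeeping the paper leaves implicit.
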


\section{Colocally connectedness}

A space $Y$ is \textit{colocally connected in $y\in Y$} provided by for each open subset $U$ of $Y$ containing $y$ there exists an open subset $V$ such that $y\in V\subset U$ and $Y-V$ is connected. $Y$ is \textit{colocally connected} if is colocally connected in each one of its points. 

It is well known that $C(X)$ is a colocally connected continuum (see \cite[Theorem 5.1]{Macias(2001)}).

\begin{theorem}\label{colocally}
If $X$ is colocally connected in $p\in X$, then $HS(p,X)$ is colocally connected.
\end{theorem}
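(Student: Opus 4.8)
The plan is to verify colocal connectedness of $HS(p,X)$ at every point, treating separately the distinguished point $C^X_p$ and the points $[A]$ with $A\in C(X)-C(p,X)$. In both cases the strategy is identical: given an open neighborhood of the point, I will produce a \emph{saturated} open set $\mathcal{V}\subset C(X)$ (one with $(\pi^X_p)^{-1}(\pi^X_p(\mathcal{V}))=\mathcal{V}$) which contains the whole fiber of the point, lies inside the preimage of the prescribed neighborhood, and has \emph{connected complement} $C(X)-\mathcal{V}$. Then $V:=\pi^X_p(\mathcal{V})$ is open, contains the given point, sits inside the prescribed neighborhood, and $HS(p,X)-V=\pi^X_p\big(C(X)-\mathcal{V}\big)$ is connected, being a continuous image of a connected set. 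Here I use that $\pi^X_p$ is a continuous surjection and that, for saturated $\mathcal{V}$, the image of the complement equals the complement of the image.

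For a point $[A]$ with $A\in C(X)-C(p,X)$, let $U$ be an open neighborhood of $[A]$ and put $\mathcal{U}=(\pi^X_p)^{-1}(U)$. Since $C(p,X)$ is closed in $C(X)$ and $A\notin C(p,X)$, the set $\mathcal{U}\cap\big(C(X)-C(p,X)\big)$ is an open neighborhood of $A$ disjoint from $C(p,X)$. Because $C(X)$ is colocally connected, there is an open $\mathcal{V}$ with $A\in\mathcal{V}\subset \mathcal{U}\cap\big(C(X)-C(p,X)\big)$ and $C(X)-\mathcal{V}$ connected. As $\mathcal{V}$ misses $C(p,X)$, it is a union of singleton fibers, hence saturated, so the scheme above applies; note this step uses only the (known) colocal connectedness of $C(X)$ and not the hypothesis on $X$.

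The delicate point is $C^X_p$, whose fiber is the entire set $C(p,X)$. Given an open $U\ni C^X_p$, set $\mathcal{U}=(\pi^X_p)^{-1}(U)\supset C(p,X)$. As candidate neighborhoods I will use $\langle W,X\rangle=\{A\in C(X):A\cap W\neq\emptyset\}$ for open $W\ni p$; these are open, contain $C(p,X)$, and are saturated, with complement $\{A\in C(X):A\subset X-W\}=C(X-W)$. Two things must be arranged. First, $\langle W,X\rangle\subset\mathcal{U}$ for small $W$: I expect this to be the main obstacle, and I would establish it by a compactness argument. If it failed for every $W=B_{1/n}(p)$, one could pick $A_n\in\langle B_{1/n}(p),X\rangle-\mathcal{U}$, pass to a Hausdorff-convergent subsequence $A_n\to A$, and check that $p\in A$ (some point of $A_n$ lies within $1/n$ of $p$), so $A\in C(p,X)\subset\mathcal{U}$, contradicting $A\notin\mathcal{U}$. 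Second, $C(X-W)$ must be connected; here the hypothesis enters. Since $X$ is colocally connected at $p$, after fixing $W_0$ with $\langle W_0,X\rangle\subset\mathcal{U}$ I can choose an open $W$ with $p\in W\subset W_0$ and $X-W$ connected, whence $\langle W,X\rangle\subset\langle W_0,X\rangle\subset\mathcal{U}$ and $C(X-W)$ is connected by Lemma \ref{Cconnected}. Setting $\mathcal{V}=\langle W,X\rangle$ then yields $HS(p,X)-V=\pi^X_p(C(X-W))$ connected, which completes this case and the proof.
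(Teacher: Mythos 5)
Your proof is correct, and in the decisive case of the distinguished point it takes a genuinely different route from the paper. At points $[A]\neq C^X_p$ the two arguments essentially coincide: you shrink the neighborhood away from the fiber (by intersecting with the open set $C(X)-C(p,X)$, where the paper instead assumes $C^X_p\notin\mathfrak{U}$), invoke the known colocal connectedness of $C(X)$ from \cite[Theorem 5.1]{Macias(2001)}, and push forward a set that is saturated because it misses the only nondegenerate fiber. At $C^X_p$, the paper argues metrically: it takes $W$ from the hypothesis inside a small ball around $p$, sets $\delta=\sup\{d(p,w):w\in W\}$, uses the Hausdorff $\delta$-neighborhood $N(\delta,C(p,X))$ of the fiber as the saturated open set, and proves its complement connected by joining each member via an order arc to the anchor set $F_1(X-W)$. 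You instead take the Vietoris set $\langle W,X\rangle=\{A\in C(X):A\cap W\neq\emptyset\}$, whose complement is exactly $C(X-W)$, so connectedness of the complement drops out of Lemma \ref{Cconnected} with no order-arc bookkeeping; and you secure the containment $\langle W_0,X\rangle\subset(\pi^X_p)^{-1}(U)$ by a correct compactness argument (a Hausdorff limit of continua meeting $B_{1/n}(p)$ contains $p$, while the closed set $C(X)-(\pi^X_p)^{-1}(U)$ is preserved under limits), which shows the sets $\langle W,X\rangle$ form a neighborhood base of the whole fiber $C(p,X)$. This last step is more than a stylistic gain: the paper's write-up controls only a ball about the single point $\{p\}$, and neither the inclusion $N(\delta,C(p,X))\subset(\pi^X_p)^{-1}(\mathfrak{W})$ nor the claimed inclusion $F_1(X-W)\subset C(X)-N(\delta,C(p,X))$ is actually verified there (the latter can fail for an arbitrary open $W$, since $\delta$ is a supremum over $W$ while points outside $W$ may lie much closer to $p$); repairing it requires precisely the kind of fiber-compactness argument you supply. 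So your proposal is not only valid but, in the $C^X_p$ case, more robust than the printed proof; what the paper's metric approach would buy, once completed, is an explicit $\delta$-quantified neighborhood rather than one produced by contradiction.
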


\begin{proof}
Let $[A]\in HS(p,X)$. In a first case, suppose that $[A]\neq C^{X}_{p}$. Let $\mathfrak{U}$ be an open subset of $HS(p,X)$ containing $[A]$. Without loss of generality, suppose that $C^{X}_{p}\notin \mathfrak{U}$. Therefore, $A\in (\pi^{X}_{p})^{-1}(\mathfrak{U})\subset C(X)-C(p,X)$. Since $C(X)$ is colocally connected, we can consider an open subset $\mathcal{V}$ of $C(X)$ such that $A\in \mathcal{V}\subset (\pi^{X}_{p})^{-1}(\mathfrak{U})$ and $C(X)-\mathcal{V}$ is connected. Thus, $\pi^{X}_{p}(\mathcal{V})$ is an open subset of $HS(p,X)$ such that $[A]\in \pi^{X}_{p}(\mathcal{V})\subset \mathfrak{U}$ and $HS(p,X)-\pi^{X}_{p}(\mathcal{V})=\pi^{X}_{p}(C(X)-\mathcal{V})$ is connected. We conclude that $HS(p,X)$ is colocally connected in $[A]$.\\
It remains to analyze the case when $[A]=C^{X}_{p}$. In order to prove that $HS(p,X)$ is locally connected in $C^{X}_{p}$, let $\mathfrak{W}$ be an open subset of $HS(p,X)$ containing $C^{X}_{p}$. Then, $(\pi^{X}_{p})^{-1}(\mathfrak{W})$ is an open subset of $C(X)$ and $C(p,X)\subset (\pi^{X}_{p})^{-1}(\mathfrak{W})$. Since $\{p\}\in C(p,X)$ there exists $\varepsilon>0$ such that $B_{\varepsilon}(\{p\})\subset (\pi^{X}_{p})^{-1}(\mathfrak{W})$. By hypothesis, there exists an open subset $W$ of $X$ such that $p\in W\subset B_{\frac{\varepsilon}{2}}(p)$ and $X-V$ is connected. Let $\delta=\sup\{d(p,w):w\in W\}$. Note that $0<\delta<\varepsilon$. To finish the proof it is enough to show that $C(X)-N(\delta, C(p,X))$ is connected. To see this, observe that $F_{1}(X-V)\subset C(X)-N(\delta, C(p,X))$ and $F_{1}(X-V)$ is connected. Our task now consist in joining each point of $C(X)-N(\delta, C(p,X))$ which some point of $F_{1}(X-V)$. Now, given $A\in C(X)-N(\delta, C(p,X))$, it follows that $H(A,\{p\})\geq\delta$, then there exists $a\in A$ such that $d(a,p)\leq \delta$. Thus, $a\in X-V$. Let $\alpha:[0,1]\to C(X)$ be an ordered arc from $\{a\}$ to $A$. It is clear that $\alpha([0,1])$ is a connected subset of $C(X)-N(\delta,C(p,X))$ containing $A$ and $\alpha([0,1])\cap F_{1}(X-V)=\{\{a\}\}$.  
\end{proof}

Since colocal connectedness implies aposyndesis, we have the following result.

\begin{corollary}\label{aposyndetic}
If $X$ is colocally connected in $p\in X$, then $HS(p,X)$ is aposyndetic.
\end{corollary}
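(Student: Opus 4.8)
The plan is to derive the corollary from Theorem~\ref{colocally} together with the general principle that every colocally connected continuum is aposyndetic. By Theorem~\ref{colocally}, the hypothesis that $X$ is colocally connected at $p$ already yields that $HS(p,X)$ is colocally connected at each of its points. Thus it suffices to prove the implication in full generality: if a continuum $Y$ is colocally connected, then $Y$ is aposyndetic. Applying this to $Y=HS(p,X)$ finishes the corollary.

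To establish the general implication, I would fix distinct points $x,y\in Y$ and produce a continuum $M\subset Y$ with $x\in\textrm{int}(M)$ and $y\in Y-M$. Since $Y$ is metric, I first choose disjoint open sets $U_x,U_y$ with $x\in U_x$ and $y\in U_y$. Applying colocal connectedness of $Y$ at the point $y$ to the open neighborhood $U_y$, I obtain an open set $V$ with $y\in V\subset U_y$ such that $Y-V$ is connected, and I set $M:=Y-V$.

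It then remains to verify that $M$ is a continuum with the required separation properties. Being closed in the compact space $Y$, the set $M$ is compact; it is connected by the choice of $V$; and it is nondegenerate because it contains the nonempty open set $U_x$, which is infinite since a nondegenerate continuum has no isolated points. Hence $M$ is a continuum. Moreover $y\in V=Y-M$, giving $y\in Y-M$. Finally, from $V\subset U_y$ and $U_x\cap U_y=\emptyset$ I get $U_x\subset Y-V=M$; as $U_x$ is open and contains $x$, this yields $x\in\textrm{int}(M)$. Therefore $Y$ is aposyndetic at $x$ with respect to $y$, and since $x,y$ were arbitrary, $Y$ is aposyndetic.

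The main obstacle is the interior condition $x\in\textrm{int}(M)$: choosing $V$ merely inside $Y-\{x\}$ would only guarantee $x\in M$, not membership in its interior. The device that overcomes this is to shrink $V$ into a neighborhood $U_y$ of $y$ that is separated from a neighborhood $U_x$ of $x$, so that $U_x$ serves as an open set witnessing $x\in\textrm{int}(M)$; the Hausdorff property of the metric space $Y$ is exactly what makes this separation available.
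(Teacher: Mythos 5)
Your proposal is correct and follows the paper's route exactly: the paper deduces the corollary from Theorem~\ref{colocally} via the one-line remark that colocal connectedness implies aposyndesis. The only difference is that you supply a full (and correct) proof of that folklore implication --- applying colocal connectedness at $y$ inside a neighborhood disjoint from one of $x$, so that the connected complement $M=Y-V$ contains $U_x$ and hence $x$ in its interior --- which the paper simply cites without proof.
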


We can proof an even finer result. To stay it, we need before to recall that $X$ is \textit{finitely aposyndetic} if for each finite subset $F$ of $X$ and point $x$ of $X$ not in $F$ , there exists a subcontinuum $W$ of $X$ such that 
$x\in \textrm{int}(W)\subset W\subset X\setminus F$.

\begin{corollary}
If $X$ is colocally connected in $p\in X$, then $HS(p,X)$ is finitely aposyndetic.
\end{corollary}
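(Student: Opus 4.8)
The plan is to strengthen the conclusion of Corollary~\ref{aposyndetic} from ordinary aposyndesis (aposyndesis with respect to single points) to finite aposyndesis, where we must separate a point from an arbitrary finite set $F\subset HS(p,X)$ by a subcontinuum whose interior contains the point. First I would fix an arbitrary point $[A]\in HS(p,X)$ and a finite set $F=\{[A_1],\dots,[A_k]\}$ with $[A]\notin F$, and reduce to the two cases already appearing in the proof of Theorem~\ref{colocally}: either $[A]\neq C^X_p$, or $[A]=C^X_p$. In the first case, since $X$ is colocally connected in $p$, we know $HS(p,X)$ is colocally connected, so for any open $\mathfrak{U}\ni [A]$ avoiding $F$ we can find an open $V$ with $[A]\in V\subset \mathfrak{U}$ and $HS(p,X)-V$ connected; taking $W=\operatorname{cl}(HS(p,X)-V)^{c}$ is not quite a continuum, so instead I would work inside $C(X)$ where $C(X)$ itself is colocally connected, and pull the construction back through $\pi^X_p$.

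The cleaner route is to exploit that colocal connectedness gives a connected \emph{complement} of a small open neighborhood, and then observe that $HS(p,X)$ is a continuum, hence the closure of such a connected complement is a subcontinuum whose complementary open set is an open neighborhood of $[A]$. Concretely, for each of the finitely many $[A_i]\in F$ I would use colocal connectedness of $HS(p,X)$ at $[A]$ (guaranteed by Corollary~\ref{aposyndetic}'s underlying argument) to produce an open $V\ni [A]$ with $[A_i]\notin V$ for all $i$ and $HS(p,X)-V$ connected. Setting $W=\operatorname{cl}(HS(p,X)-V)$, the set $W$ is a subcontinuum of $HS(p,X)$ (closure of a connected set is connected), it contains every $[A_i]$, and its complement $HS(p,X)-W$ is open; but I need $[A]$ in the interior of a continuum \emph{avoiding} $F$, which is the reverse inclusion. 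So the correct object is the complement: I want a continuum $M$ with $[A]\in\operatorname{int}(M)\subset M\subset HS(p,X)-F$.

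Thus the key step is to produce, for the point $[A]$ and finite set $F$, an open set $U\supset F$ with $\operatorname{cl}(U)$ not containing $[A]$ and with $HS(p,X)-U$ connected, so that $M:=\operatorname{cl}(HS(p,X)-U)$ is the desired continuum: it is connected (closure of connected), contains $[A]$ in its interior (namely $HS(p,X)-\operatorname{cl}(U)$), and misses $F$. To obtain such a $U$ I would, for each $[A_i]$, invoke colocal connectedness of $HS(p,X)$ at the point $[A_i]$ to choose open $V_i\ni [A_i]$ with $[A]\notin \operatorname{cl}(V_i)$ and $HS(p,X)-V_i$ connected, then set $U=\bigcup_i V_i$; the complement $HS(p,X)-U=\bigcap_i(HS(p,X)-V_i)$ is a finite intersection of connected sets, and here lies the main obstacle, since a finite intersection of connected sets need not be connected. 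To overcome this I would use that each $HS(p,X)-V_i$ is not merely connected but can be taken with connected interior (or use the unicoherence of $HS(p,X)$ established in Theorem~\ref{unicoherent}) to control the intersection.

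The hard part will be guaranteeing connectedness of the complement of the \emph{union} of the finitely many neighborhoods, rather than of a single neighborhood. The natural tool is the unicoherence of $HS(p,X)$ (Theorem~\ref{unicoherent}): writing $HS(p,X)=(HS(p,X)-V_1)\cup\cdots$ and intersecting two at a time, unicoherence of the ambient continuum forces the pairwise intersections of these connected closed complements to remain connected, and an induction on the number of neighborhoods $k$ then keeps the running intersection connected. Alternatively, and perhaps more robustly, I would replace the appeal to colocal connectedness by the colocal connectedness of $C(X)$ directly: lift $F$ to representatives in $C(X)$, apply the colocally connected structure of $C(X)$ together with its unicoherence to build a single continuum in $C(X)$ containing $A$ in its interior and avoiding the representatives of $F$ (handling the case $C^X_p\in F$ exactly as in Theorem~\ref{colocally} by absorbing $C(p,X)$ into the open set), and finally push the continuum forward by $\pi^X_p$, using that $\pi^X_p$ is a monotone surjection so that images of continua are continua and images of the relevant open sets witness the interior condition.
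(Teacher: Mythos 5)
Your proposal is correct in substance but takes a genuinely different route from the paper. The paper's proof is a two-line citation: by Theorem \ref{unicoherent} and Corollary \ref{aposyndetic}, $HS(p,X)$ is unicoherent and aposyndetic, and then \cite[Corollary 1]{Bennett(1971)} (aposyndetic unicoherent continua are finitely aposyndetic) finishes immediately. You instead reprove the needed special case from scratch, using the stronger fact actually available here: Theorem \ref{colocally} makes $HS(p,X)$ colocally connected at \emph{every} point, including $C^{X}_{p}$. Around each $[A_i]$ in the finite set $F$ you pick an open $V_i$ with connected complement and $[A]\notin\mathrm{cl}(V_i)$, and take $M=HS(p,X)-\bigcup_i V_i$; connectedness of $M$ comes from unicoherence and induction, $M$ is automatically closed, and since the union is finite, $[A]\in HS(p,X)-\mathrm{cl}\bigl(\bigcup_i V_i\bigr)\subset\mathrm{int}(M)\subset M\subset HS(p,X)-F$. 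This is sound and buys self-containedness, whereas the paper's route is shorter and more general (Bennett needs only aposyndesis at the points of $F$, not colocal connectedness). One detail you must make explicit for your unicoherence step: for two closed connected complements one has $(Z-W)\cup(Z-V)=Z$ only when $W\cap V=\emptyset$, so the $V_i$ must be shrunk to be pairwise disjoint --- which is possible since the $[A_i]$ are finitely many distinct points of a metric continuum; with that, the inductive step $\bigl(Z-(V_1\cup\cdots\cup V_m)\bigr)\cup(Z-V_{m+1})=Z$ is valid and unicoherence gives connectedness of $Z-(V_1\cup\cdots\cup V_{m+1})$ at each stage. Your alternative sketch of lifting to $C(X)$ and pushing forward by $\pi^{X}_{p}$ can also be made to work, but it requires the saturation care from the proof of Theorem \ref{colocally} (images of open sets under a quotient map are open only for saturated sets); the intrinsic argument in $HS(p,X)$ is the cleaner of your two routes.
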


\begin{proof}
By Theorem \ref{unicoherent} and Corollary \ref{aposyndetic}, $HS(p,X)$ is unicoherent and aposyndetic. The result now follows directly from \cite[Corollary 1]{Bennett(1971)}.
\end{proof}

\begin{theorem}\label{characterization}
Suppose that $X$ has the property of Kelley. Then the following are equivalent:
\begin{enumerate}
\item $X$ is an arc or a simple closed curve.
\item There exist $p,q\in X$, $p\neq q$, such that $X$ is colocally connected in $p$ and $q$ and the hyperspaces $HS(p,X)$ and $HS(q,X)$ can be embedded in $\mathbb{R}^{2}$.
\end{enumerate}
\end{theorem}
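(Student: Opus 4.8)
The plan is to prove the theorem by establishing both implications, with the forward direction $(1)\Rightarrow(2)$ being essentially a verification and the converse $(2)\Rightarrow(1)$ being the substantive part. For $(1)\Rightarrow(2)$, if $X$ is an arc or a simple closed curve, then $X$ is locally connected and every point fails to be a ramification point; I would pick any two distinct points $p,q$. By the geometric models in Examples \ref{arco} and \ref{circle}, the hyperspaces $HS(p,X)$ and $HS(q,X)$ are either two $2$-cells glued at a point (the arc case with interior points), a single $2$-cell (the arc case with an endpoint), or homeomorphic to $C(S^1)$ (a $2$-disk). Each of these planar gadgets embeds in $\mathbb{R}^2$, and colocal connectedness of $X$ at each point follows since an arc or simple closed curve is locally connected and one-dimensional with the required connectivity of complements of small neighborhoods. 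So the forward direction reduces to reading off the examples.

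For the converse $(2)\Rightarrow(1)$, the key observation is that embeddability of $HS(p,X)$ in $\mathbb{R}^2$ forces $HS(p,X)$ to be at most two-dimensional, hence $\dim_{[A]}(HS(p,X))\leq 2$ for every $[A]\in HS(p,X)-\{C^X_p\}$. The strategy is to first use colocal connectedness at $p$ and $q$ together with Corollary \ref{aposyndetic} to conclude that $HS(p,X)$ and $HS(q,X)$ are aposyndetic, and more importantly to invoke Theorem \ref{locallyconnected}: since $X$ has the property of Kelley and colocal connectedness implies local connectedness of $HS(p,X)$ at $C^X_p$ (via the argument inside Theorem \ref{colocally}), I would aim to show $X$ is locally connected, so $X$ is in fact a locally connected continuum. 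Then I reduce to the finite-graph situation: I claim the dimension bound forces $X$ to have no ramification points. Indeed, if $X$ contained a ramification point $r$ with $\mathrm{ord}(r,X)\geq 3$, then by Remark \ref{ncells} and the $n$-od structure at $r$ one produces an $n$-cell with $n\geq 3$ inside $HS(p,X)$ for a suitable base point, contradicting planarity; more directly, by Corollary \ref{dimension}, any $[A]$ with $A\ni r$ and $A\notin C(p,X)$ satisfies $\dim_{[A]}(HS(p,X))\geq 2+(\mathrm{ord}(r,X)-2)\geq 3>2$.

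The main obstacle will be handling the case where $X$ is not a priori a finite graph, so that Corollary \ref{dimension} does not literally apply. The cleanest route is to argue that a locally connected continuum $X$ which is not an arc or a simple closed curve must contain a simple triod $T_3$, hence a $3$-od; by Remark \ref{ncells}, $HS(p,X)$ then contains a $3$-cell for an appropriate base point, and one must promote this to a contradiction with planarity at \emph{both} given points $p$ and $q$. This is where the two-point hypothesis is essential: the embedded $3$-cell might arise from subcontinua all containing the chosen base point (and hence be collapsed), so I would exploit the freedom to switch between $p$ and $q$. Concretely, given the triod with branch point $v$, at most one of $p,q$ can force all the relevant large-dimensional subcontinua to contain that point; for the other base point $w\in\{p,q\}$ one finds subcontinua witnessing the $3$-od that miss $w$, so the induced $3$-cell survives in $HS(w,X)$ and destroys the planar embedding. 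Thus no triod exists, and a locally connected continuum without triods and without ramification points is an arc or a simple closed curve, completing the proof.

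The technical heart, and the step I expect to require the most care, is the promotion of ``$X$ contains a triod'' to ``$HS(w,X)$ contains a $3$-cell not collapsed to $C^X_w$'' for at least one $w\in\{p,q\}$, since this is precisely the interaction between the quotient identification at the base point and the embedded cells guaranteed by Remark \ref{ncells}. Once that is in hand, the contradiction with $HS(w,X)\subset\mathbb{R}^2$ is immediate from the fact that a $3$-cell cannot embed in the plane.
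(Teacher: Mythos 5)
There is a genuine gap in your $(2)\Rightarrow(1)$ direction, at precisely the pivotal step: how you obtain local connectedness of $HS(p,X)$ and $HS(q,X)$ so that Theorem \ref{locallyconnected} can be invoked. You propose to get it from Corollary \ref{aposyndetic} (aposyndesis) plus the claim that colocal connectedness gives local connectedness of $HS(p,X)$ at $C^{X}_{p}$ ``via the argument inside Theorem \ref{colocally}''. This fails twice over. First, Theorem \ref{locallyconnected} requires local connectedness of the \emph{whole} hyperspaces: its proof of $3.\Rightarrow 1.$ uses local connectedness of the open set $HS(p,X)-\{C^{X}_{p}\}$ at points $[\{x\}]$, so local connectedness at the single point $C^{X}_{p}$ would not suffice even if you had it. Second, neither colocal connectedness nor aposyndesis implies local connectedness of a continuum (for instance $P\times[0,1]$, with $P$ a pseudo-arc, is colocally connected and hence aposyndetic but not locally connected), and what the second half of the proof of Theorem \ref{colocally} actually establishes at $C^{X}_{p}$ is \emph{colocal} connectedness there (connectedness of $C(X)-N(\delta,C(p,X))$), the wording in that proof notwithstanding. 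The paper closes exactly this gap by using the planar embedding at this stage, not only at the end: by Theorem \ref{propertyb} the hyperspaces have property (b), hence once embedded in $\mathbb{R}^{2}$ their complements are connected \cite[VI 13, p. 100]{HurewiczWallman(1948)}, and an aposyndetic plane continuum with connected complement is locally connected by Jones's theorem \cite[Theorem 1, p. 139]{Jones(1952)}. Your proposal never uses planarity for the local-connectedness step, so that step is unsupported and the reduction to a Peano continuum does not go through as written.

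Two further points. In $(1)\Rightarrow(2)$ you cannot ``pick any two distinct points'' when $X$ is an arc: an arc is colocally connected only at its end points (the complement of a small neighborhood of an interior point is disconnected), so condition $2$ forces $p,q$ to be the end points, as in the paper; your ``arc case with interior points'' violates the colocal connectedness hypothesis even though the resulting $HS(p,X)$ is planar. On the plus side, your two-point switching argument for the triod step is a genuine improvement in explicitness over the paper's terse appeal to Remark \ref{ncells}: the $3$-cell constructed there avoids $C(w,X)$ only for a base point $w$ chosen in terms of the $n$-od (indeed $HS(v,T_{3})$, $v$ the vertex, is three $2$-cells glued at a point and does embed in $\mathbb{R}^{2}$), and your observation that, once $X$ is known to be Peano, a simple triod can be shrunk away from whichever of $p,q$ is not its vertex is exactly what makes that step rigorous. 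So the endgame of your proof is sound and even sharper than the paper's; the unrepaired hole is the passage from colocal connectedness and planarity to local connectedness of the hyperspaces.
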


\begin{proof}
In order to prove $2.\Rightarrow 1.$, we assume that $p$ and $q$ satisfies $2$. By Theorem \ref{propertyb}, $HS(p,X)$ and $HS(q,X)$ have property b). Thus, $\mathbb{R}^{2}-HS(p,X)$ and $\mathbb{R}^{2}-HS(q,X)$ are connected (see \cite[VI 13, p. 100]{HurewiczWallman(1948)}). By Corollary \ref{aposyndetic}, $HS(p,X)$ and $HS(q,X)$ are aposyndetic. Therefore, $HS(p,X)$ and $HS(q,X)$ are locally connected (see \cite[Theorem 1, p. 139]{Jones(1952)}). So, by Theorem \ref{locallyconnected}, $X$ and $C(X)$ are locally connected. By using Remark \ref{ncells}, we have that $X$ does not contain $3$--ods. By \cite[31.12, p. 246]{IllanesNadler(1999)}, we conclude that $X$ is an arc or a simple closed curve.

To show $1.\Rightarrow 2.$, if $X$ is an arc with end points $p$ and $q$, it is clear that $X$ is colocally connected in $p$ and $q$, and if $X$ is a simple closed curve then $X$ is colocally connected. The result follows now from examples \ref{arco} and \ref{circle}.
\end{proof}

We can rewrite the last result to give some characterizations as follows. 

\begin{corollary}
Suppose that $X$ has the property of Kelley. Then,
\begin{enumerate}
\item $X$ is an arc if and only if there exists a unique pair of points $p,q\in X$ such that $X$ is colocally connected in $p$ and $q$ and the hyperspaces $HS(p,X)$ and $HS(q,X)$ can be embedded in $\mathbb{R}^{2}$.

\item $X$ is a simple closed curve if and only if $X$ is colocally connected and there exist $p,q\in X$, $p\neq q$, such that $HS(p,X)$ and $HS(q,X)$ can be embedded in $\mathbb{R}^{2}$.
\end{enumerate}
\end{corollary}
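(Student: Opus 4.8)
The plan is to derive this corollary directly from Theorem~\ref{characterization}, which already establishes the equivalence between ``$X$ is an arc or a simple closed curve'' and the existence of a pair of points $p,q$ at which $X$ is colocally connected and whose hyperspaces $HS(p,X)$, $HS(q,X)$ embed in $\mathbb{R}^{2}$. Each of the two statements in the corollary refines this dichotomy into a clean ``arc versus simple closed curve'' characterization, so the work consists of isolating which structural feature distinguishes the two cases.

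For part $1.$, the forward implication is immediate: by the argument given in the proof of Theorem~\ref{characterization} ($1.\Rightarrow 2.$), if $X$ is an arc with end points $p$ and $q$, then $X$ is colocally connected precisely at those two end points, and Example~\ref{arco} gives that $HS(p,X)$ and $HS(q,X)$ embed in $\mathbb{R}^{2}$. The key point I would emphasize is \emph{uniqueness}: an arc is colocally connected only at its two end points, since at any interior point $x$ removing a small connected neighborhood disconnects the complement. For the converse, I would invoke Theorem~\ref{characterization} to conclude that $X$ is an arc or a simple closed curve; then I would rule out the simple closed curve by observing that a simple closed curve is colocally connected at \emph{every} point, hence admits infinitely many such pairs, contradicting the hypothesized uniqueness. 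This leaves $X$ an arc.

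For part $2.$, the distinguishing feature is colocal connectedness \emph{everywhere} rather than at just two points. If $X$ is a simple closed curve, then $X$ is colocally connected (as noted in the proof of Theorem~\ref{characterization}) and Example~\ref{circle} yields the required planar embeddings for any pair $p\neq q$. Conversely, if $X$ is colocally connected and some pair $p,q$ satisfies the embedding condition, then hypothesis $2.$ of Theorem~\ref{characterization} holds, so $X$ is an arc or a simple closed curve; but an arc fails to be colocally connected at its interior points, so $X$ must be a simple closed curve.

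The main obstacle, such as it is, is not analytic but bookkeeping: one must state precisely the elementary fact that an arc is colocally connected at exactly its two end points while a simple closed curve is colocally connected at every point, and then use these facts as the ``separating'' criterion between the two members of the dichotomy supplied by Theorem~\ref{characterization}. Once these local-versus-global colocal connectedness observations are recorded, both equivalences follow formally, with the uniqueness clause in part $1.$ being the only subtlety worth spelling out.
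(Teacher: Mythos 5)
Your proof is correct and follows exactly the route the paper intends: the paper states this corollary without proof as a direct rewriting of Theorem~\ref{characterization}, and your argument supplies precisely the implicit details, namely that an arc is colocally connected exactly at its two end points while a simple closed curve is colocally connected everywhere (with Examples~\ref{arco} and~\ref{circle} providing the planar embeddings). Nothing in your write-up deviates from or adds to the paper's intended derivation, and the uniqueness clause in part~1 is handled correctly.
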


\section{Open questions}

To finish this paper, we write down some open question about the hyperspaces $HS(p,X)$.

In relation with the Example \ref{finales}, we have the following.

\begin{question}
What are all the trees, $X$, such that if $p\in E(X)$, $HS(p,X)$ is homeomorphic to $C(X)$?
\end{question}

Concerning Corollary \ref{aposyndesis}, we have the following question.

\begin{question}
If $X$ is a continuum and $p\in X$, is $HS(p,X)$ aposyndetic (or semi--aposyndetic)?
\end{question}

In \cite[Theorem 3.3, p.112]{Escobedo(2004)} and \cite[Corollary 3.5, p. 128]{Macias(2004)} are presented 
results about the topological structure of $HS_{n}(X)$ in the case when $X$ is hereditarily indecomposable. 
In this sense, we present the following problem:

\begin{problem}
Is it possible to characterize some classes of continua such as dendrites, dendroids, indecomposables, 
hereditarily indecomposables, in terms of some properties of the hyperspaces $HS(p,X)$?
\end{problem}

We finish the paper with the following question.

\begin{question}
Can the Kelly condition in any of the theorems \ref{locallyconnected} or  \ref{characterization} put it aside?
\end{question}

\section{Acknowledgement}
The authors wish to thank to Rosemberg Toal\'a, Eli Vanney Roblero and Sergio Guzm\'an for the fruitful discussions about the topic of this paper in the Seminario de Topolog\'ia, Geometr\'ia y sus Aplicaciones FCFM--UNACH.


\end{document}